\newif\ifdviwin
\newif\ifdviwin
\def\ch{\mathfrak{h}}
\let\alfa=\alpha
\let\parc=\partial
\let\ep=\varepsilon
\def\flecha{\rightarrow}
\def\esiz{\langle}
\def\esde{\rangle}
\def\cte.{\mathop{\rm cte.}\nolimits}
\def\div{\mathop{\rm div }\nolimits}
\def\Z{\mathbb{Z}}
\def\R{\mathbb{R}}
\def\cH{\mathcal{H}}
\def\cW{\mathcal{W}}
\def\S{\mathbb{S}}
 \newtheorem{defi}{Definition}
 \newtheorem{teo}[defi]{Theorem}
 \newtheorem{pro}[defi]{Proposition}
 \newtheorem{cor}[defi]{Corollary}
 \newtheorem{lem}[defi]{Lemma}
 \newenvironment{proof}{\rm \trivlist \item[\hskip \labelsep{\it
      Proof}:]}{\par\nopagebreak \hfill $\Box$ \endtrivlist}
\numberwithin{equation}{section}
\numberwithin{figure}{section}
\numberwithin{defi}{section}
\begin{document}

\mbox{}\vspace{0.4cm}
\begin{center}
\rule{14cm}{1.5pt}\vspace{0.5cm}

\renewcommand{\thefootnote}{\,}
{\Large \bf Rotational hypersurfaces of prescribed\\[0.2cm] mean curvature}\\ \vspace{0.5cm} {\large Antonio Bueno$^a$, José A.
Gálvez$^b$ and Pablo Mira$^c$}\\ \vspace{0.3cm} \rule{14cm}{1.5pt}
\end{center}
  \vspace{1cm}
$\mbox{}^a,^b$ Departamento de Geometría y Topología, Universidad de Granada,
E-18071 Granada, Spain. \\ e-mail: jabueno@ugr.es, jagalvez@ugr.es \vspace{0.2cm}

\noindent $\mbox{}^c$ Departamento de Matemática Aplicada y Estadística,
Universidad Politécnica de Cartagena, E-30203 Cartagena, Murcia, Spain. \\
e-mail: pablo.mira@upct.es  \let\thefootnote\relax\footnote{\hspace{-.75cm} Mathematics Subject
Classification: 53A10, 53C42, 34C05, 34C40}\vspace{0.3cm}

 \begin{abstract}
We use a phase space analysis to give some classification results for rotational hypersurfaces in $\R^{n+1}$ whose mean curvature is given as a prescribed function of its Gauss map. For the case where the prescribed function is an even function in $\S^n$, we show that a Delaunay-type classification holds for this class of hypersurfaces. We also exhibit examples showing that the behavior of rotational hypersurfaces of prescribed (non-constant) mean curvature is much richer than in the constant mean curvature case.
 \end{abstract}

\section{Introduction}

In this paper we study the existence and classification of rotational hypersurfaces $\Sigma$ of the Euclidean space $\R^{n+1}$ whose mean curvature $H_{\Sigma}$ is given as a prescribed function of its Gauss map $\eta:\Sigma\flecha \S^n\subset \R^{n+1}$. Specifically, given $\cH\in C^1(\S^n)$, we are interested in finding hypersurfaces $\Sigma$ that satisfy 

 \begin{equation}\label{presH}
H_{\Sigma}=\cH\circ \eta.
\end{equation}
Following \cite{BGM}, any such $\Sigma$ will be called a hypersurface of \emph{prescribed mean curvature} $\cH$, or an \emph{$\cH$-hypersurface}, for short. When $\Sigma$ is given as a graph $x_{n+1}=u(x_1,\dots, x_n)$ in canonical coordinates of $\R^{n+1}$, and we endow $\Sigma$ with its \emph{upwards orientation}, i.e., the one for which $\esiz \eta,e_{n+1}\esde >0$, where $e_{n+1}=(0,\dots,0,1)$, then \eqref{presH} is written as the elliptic, second order quasilinear PDE 
\begin{equation}\label{eqH}
{\rm div}\left(\frac{Du}{\sqrt{1+|Du|^2}}\right) = n \cH (Z_u), \hspace{1cm} Z_u:= \frac{(-Du,1)}{\sqrt{1+|Du|^2}},
\end{equation}
where $\div, D$ denote respectively the divergence and gradient operators on $\R^n$; note that $Z_u$ is the unit normal of the graph. A difficulty arising in the study of \eqref{eqH} is that, in contrast with the well-studied case in which the right-hand side in \eqref{eqH} depends only on $u$, this equation does not have in general a variational structure.

The geometric study of $\cH$-hypersurfaces is motivated by classical works of Alexandrov and Pogorelov in the 1950s (see \cite{Al, Po}) about existence and uniqueness of ovaloids in $\R^{n+1}$ defined by a prescribed curvature function in terms of its Gauss map. 
However, the global geometry of complete, non-compact $\cH$-hypersurfaces in $\R^{n+1}$ has not been studied in the literature for general choices of $\cH$ until recently. Indeed, the only well studied geometric theories in this regard are those of constant mean curvature hypersurfaces (corresponding to $\cH={\rm constant})$ and of self-translating solitons of the mean curvature flow, which correspond to the choice $\cH(x)=\esiz x,v\esde$ for some $v\in \S^n$.

In \cite{BGM} the authors started the development of the global theory of complete $\cH$-hypersurfaces, with a special emphasis on its relation with the case of constant mean curvature hypersurfaces. We showed there that, under mild symmetry and regularity assumptions on the function $\cH$, the theory of $\cH$-hypersurfaces sometimes admits a uniform treatment that resembles the constant mean curvature case.

Our goal in the present paper is to complement the theory developed in \cite{BGM} with a study of the rotational $\cH$-hypersurfaces in $\R^{n+1}$, in the case that the function $\cH \in C^1(\S^n)$ is rotationally symmetric, i.e. $\cH(x)=\ch(\esiz x, e_{n+1}\esde)$ for some $\ch\in C^1([-1,1])$. In such generality for $\cH$, it seems hopeless to find an explicit description of such rotational $\cH$-hypersurfaces, similar to the cases of CMC hypersurfaces, or of the self-translating solitons of the mean curvature flow. Thus, we will follow a different approach. We will treat the resulting ODE as a nonlinear autonomous system and we will carry out a qualitative study of its solutions through a phase space analysis. In this way, we will prove that, for some very general choices of $\cH$, the geometry of these rotational $\cH$-hypersurfaces resembles the classical constant mean curvature case. However, we will also show that in general there exists an immense variety of global geometric behaviors for general rotational $\cH$-hypersurfaces. As a matter of fact, some of the examples that we construct here show that the hypotheses in some theoretical results in \cite{BGM} are necessary.

We next explain the organization of the paper. In Section \ref{sec:rot1} we will develop the phase space analysis explained above, and use it to prove that any rotational $\cH$-hypersurface in $\R^{n+1}$ diffeomorphic to $\S^n$ is strictly convex (Theorem \ref{teoes}). Let us remark that by a \emph{strictly convex hypersurface} in $\R^{n+1}$ we mean an oriented hypersurface $\Sigma$ in $\R^{n+1}$ all of whose principal curvatures are different from zero and of the same sign. In particular, if $\Sigma$ is compact and strictly convex, then the Gauss map $\eta:\Sigma\flecha \S^n$ is a global diffeomorphism, and $\Sigma$ is the boundary of a strictly convex bounded set of $\R^{n+1}$; we call then $\Sigma$ a \emph{strictly convex sphere}.

When $\cH$ vanishes at some point, we will construct in Section \ref{sec:bowls}, for very general choices of $\cH$, a family of rotational $\ch$-bowls (which are entire strictly convex graphs) and of $\ch$-catenoids (which resemble the usual minimal catenoids in $\R^{n+1}$, and are diffeomorphic to $\S^{n-1}\times \R$).

In Section \ref{sec:rot2} we will prove a classification theorem for rotational $\cH$-hypersurfaces in $\R^{n+1}$, in the case that $\cH\in C^1(\S^n)$ is positive, rotationally symmetric and \emph{even} (i.e. $\cH(x)=\cH(-x)>0$ for every $x\in \S^n$). In these general conditions, we will show in Theorem \ref{dela} that the geometry of such rotational $\cH$-hypersurfaces follows the same pattern as the classical Delaunay classification of rotational hypersurfaces of non-zero constant mean curvature in $\R^{n+1}$. That is, all such examples are convex spheres, right circular cylinders, properly embedded hypersurfaces of \emph{unduloid type}, or proper, non-embedded hypersurfaces of \emph{nodoid type}.

In contrast with this classification theorem and the existence of bowls and catenoids, in Section \ref{sec:nocmc} we will show that there exist many rotational $\cH$-surfaces in $\R^3$ which do not behave at all like CMC surfaces in $\R^3$. For instance, we will consruct complete, convex $\cH$-graphs converging to a cylinder, or properly embedded disks asymptotically wiggling around a cylinder. We will also show examples with a \emph{wing-like} shape, or with two strictly convex ends pointing in opposite directions. Many of these examples can also be constructed so that they self-intersect. All this variety, just for the very particular class of rotational $\cH$-surfaces in $\R^3$, shows that the class of $\cH$-hypersurfaces in $\R^{n+1}$ is indeed very large, and rich in what refers to possible examples and geometric behaviors.

The phase space analysis developed in the present paper has motivated some studies in more general situations, see \cite{B1,B2,B3,GM3}. Let us also remark that the study of rotational graphs $x_{n+1}=u(x_1,\dots, x_n)$ for which the mean curvature is given as a prescribed function of $u$ (and not of its Gauss map) has received many contributions, in $\R^{n+1}$ and other ambient spaces; see e.g. \cite{BV,DRT,DG,Lo2,P} and references therein for just a few examples. See also \cite{K,KN} for the case where the mean curvature is prescribed as a function of the profile curve of the rotational hypersurface.

\vspace{0.3cm}

{\bf Acknowledgements:} This work is part of the PhD thesis of the first author.

\section{Phase space analysis of rotational $\cH$-hypersurfaces}\label{sec:rot1}

In this section we will let $\cH\in C^1(\S^n)$ be a rotationally symmetric function, i.e. 
$\cH(x)= \mathfrak{h}(\esiz x,v\esde)$
for some $v\in \S^n$ and some $C^1$ function $\mathfrak{h}$ on $[-1,1]$. Up to an Euclidean change of coordinates, we will assume that $v=e_{n+1}$, and so
 \begin{equation}\label{presim0}
\cH(x)= \mathfrak{h}(\esiz x,e_{n+1}\esde).
 \end{equation} 
Thus, equation \eqref{presH} for an immersed oriented hypersurface $\Sigma$ in $\R^{n+1}$ yields \begin{equation}\label{presim}H_{\Sigma}=\mathfrak{h}\circ \nu,\end{equation} where $\nu:=\langle \eta,e_{n+1}\rangle$ is the \emph{angle function} of $\Sigma$.

Let $\Sigma$ be an immersed, oriented, rotational hypersurface in $\R^{n+1}$, obtained as the orbit of a regular planar curve parametrized by arc-length $$\alfa(s)=(x(s),0,...,0,z(s)):I\subset \R\flecha \R^{n+1}, \hspace{1cm} x(s)>0,$$ under the action of all orientation preserving linear isometries of $\R^{n+1}$ that leave the $x_{n+1}$-axis pointwise fixed. A parametrization for $\Sigma$ is $$\psi(s,p)= (x(s) p, z(s)): I\times \S^{n-1} \flecha \R^{n+1}.$$ By changing the orientation of the profile curve if necessary, the angle function of $\Sigma$ is given by $\nu=x'(s)$. There are at most two different principal curvatures on $\Sigma$, given by
\begin{equation}\label{pricu}\kappa_1=\kappa_{\alfa}= x'(s) z''(s)-x''(s) z'(s), \hspace{1cm} \kappa_{2}=\cdots = \kappa_n= \frac{z'(s)}{x(s)},
\end{equation} where $\kappa_{\alfa}$ denotes the geodesic curvature of the profile curve $\alfa(s)$.

Let now $\cH,\mathfrak{h}$ be in the conditions stated at the beginning of this section, related by \eqref{presim0}, and let $\Sigma$ be a rotational $\cH$-hypersurface in $\R^{n+1}$. Thus, $\Sigma$ satisfies \eqref{presim}. So, from \eqref{pricu}, the profile curve $\alfa(s)$ of $\Sigma$ satisfies \begin{equation}\label{ode1}
n\mathfrak{h}(x')=x'z''-x''z'+(n-1)\frac{z'}{x}.
\end{equation}
Noting that $x'^2+z'^2=1$, we obtain from \eqref{ode1} that $x(s)$ is a solution to the autonomous second order ODE
\begin{equation}\label{ode3}
x''=(n-1)\frac{1-x'^2}{x}-n\varepsilon \,\mathfrak{h}(x')\sqrt{1-x'^2}, \hspace{1cm} \varepsilon ={\rm sign}(z'),
\end{equation}
on every subinterval $J\subset I$ where $z'(s)\neq 0$ for all $s\in J$.

Denoting $x'=y$, \eqref{ode3} transforms into the first order autonomous system
\begin{equation}\label{1ordersys}
\left(\begin{array}{c}
x\\
y
\end{array}\right)'=\left(\begin{array}{c}
y\\
 (n-1)\frac{\displaystyle{1-y^2}}{\displaystyle{x}}-n\varepsilon\, \mathfrak{h}(y)\sqrt{1-y^2}
\end{array}\right)=:F(x,y).
\end{equation}
The phase space of \eqref{1ordersys} is $\Theta_{\ep}:=(0,\8)\times (-1,1)$, with coordinates $(x,y)$ denoting, respectively, the distance to the rotation axis and the angle function of $\Sigma$. If $\ep \mathfrak{h}(0)>0$, there is a unique equilibrium of \eqref{1ordersys} in $\Theta_{\ep}$, namely 
 \begin{equation}\label{equil} 
 e_0:=\left(\frac{n-1}{n\ep \mathfrak{h}(0)},0\right).
 \end{equation}
 This equilibrium corresponds to the case where $\Sigma$ is a right circular cylinder $\S^{n-1}(r)\times \R$ in $\R^{n+1}$ of constant mean curvature $\mathfrak{h}(0)$ and vertical rulings. Otherwise, there are no equilibria in $\Theta_{\ep}$. The orbits $(x(s),y(s))$ provide then a foliation by regular proper $C^1$ curves of $\Theta_{\ep}$ (or of $\Theta_{\ep}-\{e_0\}$, in case $e_0$ exists). It is important to observe that, since $\ch$ is $C^1$, the uniqueness of the initial value problem for \eqref{1ordersys} implies that if an orbit $(x(s),y(s))$ converges to $e_0$, the value of the parameter $s$ goes to $\pm \8$.
 
The points in $\Theta_{\ep}$ where $y'(s)=0$ are those placed at the intersection of $\Theta_{\ep}$ with the (possibly disconnected) horizontal graph given for $y\in [-1,1]$ by 
\begin{equation}\label{graga}
x=\Gamma_{\ep}(y)=\frac{(n-1)\sqrt{1-y^2}}{n \varepsilon \, \mathfrak{h}(y)}.
\end{equation}
Note that $\Gamma_{\ep}(y)$ does not take a finite value at the zeros of $\mathfrak{h}(y)$, since $\mathfrak{h}$ is $C^1$. We will denote $\Gamma_{\ep}:=\Theta_{\ep} \cap \{x=\Gamma_{\ep}(y)\}$. It must be remarked that $\Gamma_{\ep}$ might be empty; for instance, in the case $\mathfrak{h}\leq 0$ and $\ep=1$. A computation shows that the values $s\in J$ where the profile curve $\alfa(s)=(x(s),z(s))$ of $\Sigma$ has zero geodesic curvature are those where $y'(s)=0$, i.e., those where $(x(s),y(s))\in \Gamma_{\ep}$.

\begin{figure}[h]
\begin{center}
\includegraphics[width=.65\textwidth]{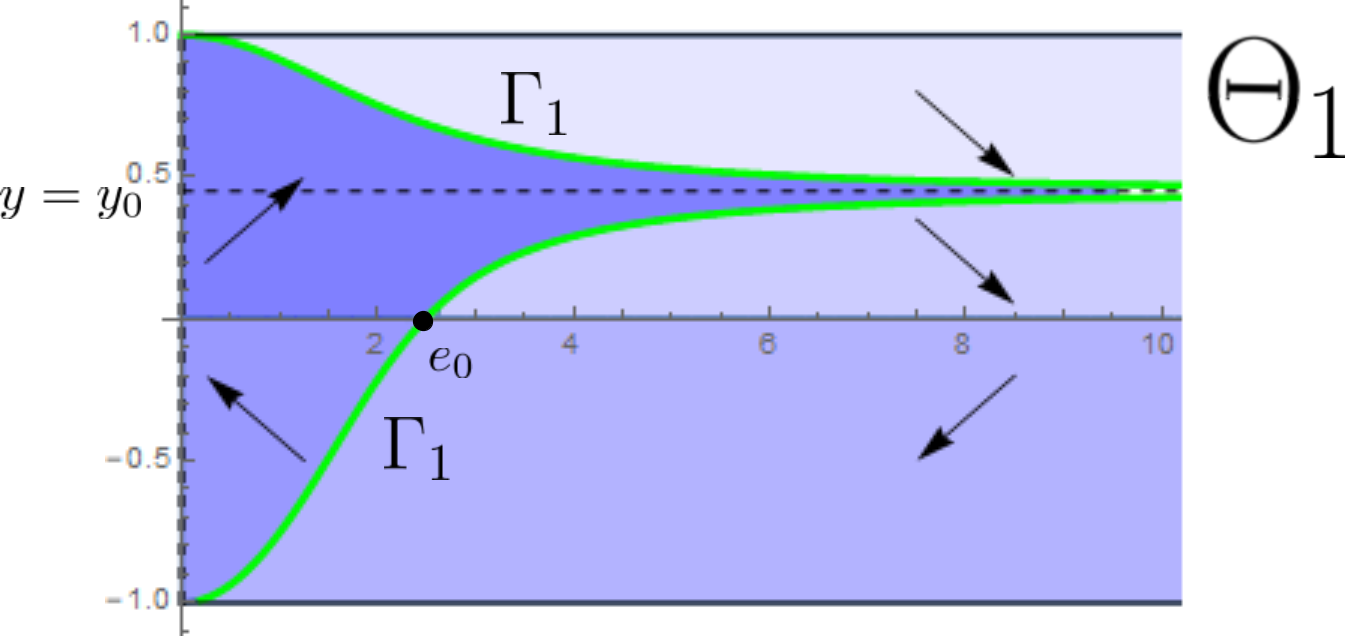}
\end{center}
\caption{An example of phase space $\Theta_1$ for some choice of $\mathfrak{h}\geq 0$ with $\mathfrak{h}(y_0)=0$. The poing $e_0$ is the equilibrium. The curve $\Gamma_1$ has two connected components, and there exist five monotonicity regions, separated by $\Gamma_1$ and the axis $y=0$. Each arrow indicates the monotonicity direction at each of those regions.}
\label{figesfa}
\end{figure}

The curve $\Gamma_{\ep}$ and the axis $y=0$ divide $\Theta_{\ep}$ into connected components where both $x(s)$ and $y(s)$ are monotonous. In particular, at each of these \emph{monotonicity regions}, the profile curve $\alfa(s)$ has geodesic curvature of constant sign. Specifically, by \eqref{pricu} we have at each point $\alfa(s)$, $s\in J$:
 \begin{equation}\label{signk}
 {\rm sign} (\kappa_1)= {\rm sign} (-\ep y'(s)), \hspace{0.5cm} {\rm sign}(\kappa_i)= \ep, \ i=2,\dots, n.
 \end{equation}
Also, by viewing the orbits of \eqref{1ordersys} as graphs $y=y(x)$ wherever possible (i.e. wherever $y\neq 0$), we have
\begin{equation}\label{yfuncx}
y\frac{dy}{dx}=(n-1)\frac{1-y^2}{x}-n\varepsilon\,\mathfrak{h}(y)\sqrt{1-y^2}.
\end{equation}
Thus, in each of these monotonicity regions the sign of the quantity $y y'(x)$ is constant. In particular, the behavior of the orbit of \eqref{1ordersys} passing through a given point $(x_0,y_0)\in \Theta_{\ep}$ is determined by the signs of $y_0$ and $x_0-\Gamma_{\ep}(y_0)$ (wherever $\Gamma_{\ep}(y_0)$ exists). We point out below some trivial particular consequences:

\begin{lem}\label{profas}
In the above conditions, for any $(x_0,y_0)\in \Theta_{\ep}$ such that $\Gamma_{\ep}(y_0)$ exists, the following properties hold:
\begin{enumerate}
\item If $x_0>\Gamma_{\ep}(y_0)$ (resp. $x_0<\Gamma_{\ep}(y_0)$) and $y_0>0$, then $y(x)$ is strictly decreasing (resp. increasing) at $x_0$. 
\item If $x_0>\Gamma_{\ep}(y_0)$ (resp. $x_0<\Gamma_{\ep}(y_0)$) and $y_0<0$, then $y(x)$ is strictly increasing (resp. decreasing) at $x_0$.
\item If $y_0=0$, then the orbit passing through $(x_0,0)$ is orthogonal to the $x$ axis.
\item If $x_0=\Gamma_{\ep}(y_0)$, then $y'(x_0)=0$ and $y(x)$ has a local extremum at $x_0$.

\end{enumerate}
\end{lem}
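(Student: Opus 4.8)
The plan is to reduce all four statements to a single sign computation on the right-hand side of \eqref{yfuncx}. Since $\Gamma_\ep(y_0)$ exists, $\mathfrak{h}(y_0)\neq 0$, and the defining relation \eqref{graga} gives $n\ep\,\mathfrak{h}(y_0)=(n-1)\sqrt{1-y_0^2}/\Gamma_\ep(y_0)$. Substituting this into \eqref{yfuncx} at $(x_0,y_0)$ collapses the right-hand side into the factored form
\[
y_0\,\frac{dy}{dx}\Big|_{x_0}=(n-1)(1-y_0^2)\left(\frac{1}{x_0}-\frac{1}{\Gamma_\ep(y_0)}\right)=(n-1)(1-y_0^2)\,\frac{\Gamma_\ep(y_0)-x_0}{x_0\,\Gamma_\ep(y_0)}.
\]
Here $(n-1)(1-y_0^2)>0$ since $y_0\in(-1,1)$, and both $x_0$ and $\Gamma_\ep(y_0)$ are positive, the latter because $(\Gamma_\ep(y_0),y_0)$ is a point of $\Gamma_\ep\subset\Theta_\ep$. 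Hence the sign of $y_0\,dy/dx$ at $x_0$ is exactly the sign of $\Gamma_\ep(y_0)-x_0$.

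For parts (1) and (2) I would simply read off this sign. If $x_0>\Gamma_\ep(y_0)$ then $y_0\,dy/dx<0$, so $dy/dx<0$ when $y_0>0$ and $dy/dx>0$ when $y_0<0$; the case $x_0<\Gamma_\ep(y_0)$ is symmetric. These are precisely the monotonicities asserted, and they are strict because the factored expression vanishes only when $x_0=\Gamma_\ep(y_0)$.

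Part (3) does not use \eqref{yfuncx}: the orbit through $(x_0,0)$ has tangent vector $F(x_0,0)$ from \eqref{1ordersys}, whose first component $x'=y$ vanishes while its second component does not (it would vanish only at the equilibrium $e_0$ of \eqref{equil}, which lies on no orbit). Thus the tangent is vertical and the orbit meets the $x$-axis orthogonally. For part (4), the hypothesis $x_0=\Gamma_\ep(y_0)$ makes the factored expression vanish, so $dy/dx|_{x_0}=0$; here $y_0\neq 0$, since $x_0=\Gamma_\ep(y_0)$ together with $y_0=0$ would force $(x_0,y_0)=e_0$. To promote this critical point to a genuine extremum I would use that, by parts (1)--(2), $dy/dx$ changes sign as the orbit crosses $\Gamma_\ep$ transversally (equivalently, the monotonicity-region structure established before the lemma reverses the sign of $y'(s)$ there while $x(s)$ stays monotone, as $x'=y\neq 0$ nearby), giving a strict local maximum or minimum of $y(x)$.

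I expect no real obstacle, as the statement is bookkeeping built on \eqref{yfuncx}. The two points needing care are the reduction to the clean factored form and, crucially, the positivity $\Gamma_\ep(y_0)>0$: this is what aligns the sign of $\Gamma_\ep(y_0)-x_0$ with the comparison hypotheses, and it is the reason the comparison must be read as $(\Gamma_\ep(y_0),y_0)$ lying on the curve $\Gamma_\ep$ rather than merely $\mathfrak{h}(y_0)\neq 0$. The only mildly non-trivial deduction is distinguishing an extremum from an inflection in (4), settled by the sign-reversal argument instead of by $dy/dx=0$ alone.
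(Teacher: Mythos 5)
Your proposal is correct and follows the same route the paper intends: the paper states Lemma \ref{profas} without a written proof, presenting it as a ``trivial particular consequence'' of \eqref{yfuncx} and the definition \eqref{graga} of $\Gamma_{\ep}$, and your factored identity $y_0\,y'(x_0)=(n-1)(1-y_0^2)\bigl(\Gamma_{\ep}(y_0)-x_0\bigr)/\bigl(x_0\Gamma_{\ep}(y_0)\bigr)$ is exactly that sign computation made explicit. Your two added observations --- that $\Gamma_{\ep}(y_0)>0$ must be read into the hypothesis (consistent with the paper's convention $\Gamma_{\ep}:=\Theta_{\ep}\cap\{x=\Gamma_{\ep}(y)\}$), and that the crossing of $\Gamma_{\ep}$ is automatically transversal (the orbit has horizontal tangent there while $\Gamma_{\ep}$ never does), which upgrades the critical point in item 4 to a genuine extremum --- are both sound and fill in the only details the paper leaves implicit.
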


In order to describe further properties of orbits in the phase space $\Theta_{\ep}$, we turn our attention to the Dirichlet problem. Recall that if $\Sigma$ is an upwards-oriented graph $x_{n+1}=u(x_1,\dots, x_n)$ of prescribed mean curvature $\cH\in C^1(\S^n)$, then $u$ satisfies \eqref{eqH}. The next result follows from \cite[Corollary 1]{Mar}, and gives general conditions for the existence of $\cH$-graphs in $\R^{n+1}$. 

\begin{pro}\label{dah}
Let $\Omega\subset \R^n$ be a bounded $C^{2,\alfa}$ domain and $\varphi \in C^{2,\alfa}(\overline{\Omega})$ for some $\alfa\in (0,1)$. Let $\cH\in C^1(\S^n)$, and assume that:
 \begin{enumerate}
 \item
${\rm max}_{\S^n} |\cH|^n \, {\rm vol} (\Omega) <\omega_n,$ where $\omega_n$ stands for the volume of the $n$-dimensional unit ball.
 \item
$H_{\parc \Omega} (x)\geq \frac{n}{n-1} |\cH(\nu(x))|$ for all $x\in \parc\Omega$, where $\nu$ is the inner pointing unit conormal of $\parc \Omega$, viewed as a vector in $\S^n\subset\R^{n+1}$, and $H_{\parc \Omega}$ is the mean curvature of the submanifold $\parc\Omega$ with respect to $\nu$.
 \end{enumerate}
Then, the Dirichlet problem 
 \begin{equation}\label{dirh}
 \left\{ \begin{array}{lll} {\rm div}\left(\displaystyle \frac{Du}{\sqrt{1+|Du|^2}}\right) = n \cH (Z_u), \hspace{0.7cm} Z_u:= \displaystyle\frac{(-Du,1)}{\sqrt{1+|Du|^2}}, & \text{ in } & \Omega, \\ u=\varphi & \text{ on } & \parc \Omega, \end{array}\right.
 \end{equation}
has a unique solution $u\in C^{2,\alfa}(\overline{\Omega})$.
\end{pro}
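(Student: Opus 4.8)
The plan is to deduce Proposition~\ref{dah} from the general existence theorem for the prescribed-mean-curvature Dirichlet problem established in \cite{Mar}. Equation~\eqref{dirh} is a quasilinear elliptic equation of mean-curvature type whose right-hand side $n\cH(Z_u)$ depends on $Du$ only through the unit normal $Z_u$, so the first task is to verify that this falls within the framework of \cite{Mar}, where the prescribed curvature may depend on the Gauss map, and that hypotheses~(1) and~(2) correspond, after normalization, to the admissibility hypotheses of \cite[Corollary~1]{Mar}. Granting that translation, the statement follows at once; what is worth spelling out is why these two hypotheses are the natural ones, that is, how they drive the a~priori estimates underlying such a theorem.

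The existence part rests on the method of continuity. Replacing $\cH$ by $t\cH$, $t\in[0,1]$, produces a family of problems of the form \eqref{dirh} that all satisfy hypotheses~(1) and~(2), since both are monotone in the size of $\cH$ (note $|t\cH|\le|\cH|$). At $t=0$ the family reduces to the minimal surface equation with boundary data $\varphi$, solvable because (2) there reads $H_{\partial\Omega}\ge 0$, i.e. $\partial\Omega$ is mean-convex; and solvability propagates to $t=1$, via the Leray--Schauder continuity method, once one has uniform a~priori $C^{1,\beta}(\overline{\Omega})$ bounds independent of $t$. These bounds split into three pieces. The height ($C^0$) estimate is supplied by hypothesis~(1): the condition $\max_{\S^n}|\cH|^n\,\mathrm{vol}(\Omega)<\omega_n$ is the isoperimetric admissibility condition which, in the constant-mean-curvature model, amounts to $\Omega$ being contained in a ball of radius $1/\max_{\S^n}|\cH|$, and it furnishes spherical-cap barriers of mean curvature $\max_{\S^n}|\cH|$ above and below the graph. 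The boundary gradient estimate is where hypothesis~(2) enters: using the signed distance to $\partial\Omega$ one constructs local upper and lower barriers, and the inequality $H_{\partial\Omega}\ge \frac{n}{n-1}|\cH(\nu)|$ is exactly the generalized Serrin condition making these functions super- and subsolutions, the factor $\frac{n}{n-1}$ reflecting the normalization in which $\cH$ averages the principal curvatures of the graph while $H_{\partial\Omega}$ is the mean curvature of the $(n-1)$-dimensional boundary. The interior gradient estimate for equations of mean-curvature type is then classical once the previous two are in hand. With a uniform $C^{1,\beta}(\overline{\Omega})$ bound, the equation becomes uniformly elliptic with H\"older coefficients and Schauder theory upgrades the solution to $C^{2,\alfa}(\overline{\Omega})$.

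Uniqueness, likewise part of \cite[Corollary~1]{Mar}, follows from the comparison principle. Writing \eqref{dirh} as $\div\, a(Du)-n\cH(Z_u)=0$ with $a(p)=p/\sqrt{1+|p|^2}$ having a strictly monotone gradient, and using that $\cH\in C^1(\S^n)$ gives Lipschitz dependence of the zero-order term on $Du$, two solutions with the same boundary values $\varphi$ must coincide by the maximum principle for quasilinear elliptic operators in divergence form.

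I expect the main obstacle to be the boundary gradient estimate together with the precise matching of hypothesis~(2) to the barrier construction in \cite{Mar}: one has to confirm that the constant $\frac{n}{n-1}$ and the use of the inner conormal $\nu$ are consistent with the conventions there, so that \cite[Corollary~1]{Mar} applies without modification. Once these normalizations are reconciled the conclusion is immediate.
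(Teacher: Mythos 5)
Your proposal takes the same route as the paper, which offers no proof at all beyond the single remark that the result ``follows from [Mar, Corollary 1]'': you likewise reduce everything to that citation, and your additional commentary on how hypotheses (1) and (2) feed the height estimate and the Serrin-type boundary gradient estimate in the continuity method is a reasonable (and essentially accurate) account of what that reference does. So the proposal is correct and matches the paper's approach; the extra exposition is a bonus rather than a divergence.
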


A direct consequence from Proposition \ref{dah} is:

\begin{lem}\label{canoex}
Let $\cH\in C^1(\S^n)$ be in the conditions stated at the beginning of the section, and $\delta \in \{-1,1\}$. Then, there exists a unique (up to vertical translations) $\cH$-hypersurface $\Sigma$ in $\R^{n+1}$ that is rotational with respect to the the $x_{n+1}$-axis, and that meets this rotation axis orthogonally at some point $p\in \Sigma$, with unit normal at $p$ given by the vertical unit vector $\delta e_{n+1}\in \R^{n+1}$.
\end{lem}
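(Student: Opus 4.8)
The plan is to produce the hypersurface as a small rotational graph over a disk centered on the axis, obtained from Proposition \ref{dah}, and then argue uniqueness via the initial value problem for the phase space system \eqref{1ordersys}. First I would set up the existence. Choose $\Omega=B_R(0)\subset\R^n$ a small ball of radius $R$ and $\varphi\equiv 0$, and consider the prescribed function $\cH$ (for $\delta=1$) or $-\cH$ composed with the antipodal map (for $\delta=-1$), so that the resulting graph has the prescribed unit normal $\delta e_{n+1}$ at the center. I would verify the two hypotheses of Proposition \ref{dah} for $R$ sufficiently small: hypothesis (1), $\max_{\S^n}|\cH|^n\,\mathrm{vol}(\Omega)<\omega_n$, holds because $\mathrm{vol}(B_R(0))=\omega_n R^n\to 0$ as $R\to 0$; and hypothesis (2), $H_{\parc\Omega}(x)\geq \frac{n}{n-1}|\cH(\nu(x))|$, holds because the sphere $\parc B_R(0)$ has mean curvature $H_{\parc\Omega}=(n-1)/R\to\infty$ as $R\to 0$, while $|\cH|$ is bounded. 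Thus Proposition \ref{dah} yields a solution $u\in C^{2,\alfa}(\overline{B_R(0)})$, and the rotational symmetry of $\cH$ together with the uniqueness clause forces $u$ to be rotationally invariant (since any rotation of $u$ about the axis solves the same Dirichlet problem). At the center $p=(0,u(0))$ the graph meets the axis orthogonally with upward normal, giving the case $\delta=1$; reflecting gives $\delta=-1$.

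Next I would translate this into the phase space language to obtain the correct initial conditions for \eqref{1ordersys} and prepare the uniqueness argument. A rotational hypersurface meeting the axis orthogonally at $p$ corresponds, in the profile curve $\alfa(s)=(x(s),z(s))$ parametrized by arc length, to $x(s_0)=0$ with the curve emanating horizontally, i.e. $x'(s_0)=\pm 1$; the sign of $x'$ and the value $\ep=\mathrm{sign}(z')$ are dictated by $\delta$. The subtlety is that the point $x=0$ lies on the boundary of the phase space $\Theta_{\ep}=(0,\8)\times(-1,1)$, where the vector field $F(x,y)$ in \eqref{1ordersys} is singular because of the $(n-1)(1-y^2)/x$ term. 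So the axis point is not covered directly by the smooth ODE theory on $\Theta_{\ep}$; this is where the graph existence from Proposition \ref{dah} is essential, as it supplies a genuine smooth $\cH$-hypersurface through the axis point whose regularity at $p$ is guaranteed a priori.

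For uniqueness I would argue as follows. Suppose $\Sigma_1,\Sigma_2$ are two rotational $\cH$-hypersurfaces meeting the axis orthogonally with the same normal $\delta e_{n+1}$ at points $p_1,p_2$. After a vertical translation aligning $p_1$ with $p_2$, both are locally graphs over a small disk centered on the axis, solving the same Dirichlet problem \eqref{dirh} with the same boundary data on a common small circle; the uniqueness statement in Proposition \ref{dah} then forces them to agree on that disk. Finally I would extend agreement globally: away from the axis both profile curves are, by the uniqueness of the initial value problem for \eqref{1ordersys} noted in the text, the unique orbit through their common initial data, and since they coincide on the initial graph neighborhood they coincide on their whole common maximal interval of definition. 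The main obstacle I anticipate is the behavior at the axis point $x=0$, i.e. reconciling the singularity of the phase-space system there with the smoothness of the hypersurface; handling this cleanly is exactly why the argument routes existence and local uniqueness through the Dirichlet problem of Proposition \ref{dah} rather than through \eqref{1ordersys} directly.
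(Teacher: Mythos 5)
Your proposal follows essentially the same route as the paper's proof: existence comes from solving the Dirichlet problem \eqref{dirh} of Proposition \ref{dah} on a sufficiently small ball with constant boundary data, the rotational symmetry of the solution is forced by the uniqueness clause together with the rotational invariance of $\cH$, and the case $\delta=-1$ is reduced to downwards-oriented graphs (equivalently, to the prescribed function $-\cH$ precomposed with the antipodal map; note that what is needed there is a reversal of orientation rather than a spatial reflection). Your explicit verification of hypotheses (1) and (2) for small $R$, and your observation that the system \eqref{1ordersys} is singular along $x=0$, so that existence and local uniqueness must be routed through the PDE rather than the ODE, are correct and are precisely the points the paper leaves implicit.

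There is, however, one step in your uniqueness argument that fails as written. After translating so that $p_1=p_2$, the two rotational graphs $u_1,u_2$ over a common small disk satisfy $u_1(0)=u_2(0)$, but they need not take the same value on a common circle $\partial B_r$: each $u_i$ restricts to some constant $c_i$ on $\partial B_r$, and nothing guarantees $c_1=c_2$ a priori. Hence you cannot assert that $u_1$ and $u_2$ solve the \emph{same} Dirichlet problem and invoke the uniqueness of Proposition \ref{dah} directly. The repair is the one-line argument the paper actually gives: since the equation \eqref{eqH} is invariant under adding constants to $u$, the function $u_2-(c_2-c_1)$ solves the Dirichlet problem on $B_r$ with boundary data $c_1$, so by uniqueness $u_2-(c_2-c_1)=u_1$ on $B_r$; evaluating at the origin and using $u_1(0)=u_2(0)$ gives $c_1=c_2$, hence $u_1\equiv u_2$. (The paper phrases this as being immediate from the maximum principle, which is the same mechanism.) With that patch, your global propagation of the agreement away from the axis via uniqueness of the initial value problem for \eqref{1ordersys} is correct, and is in fact slightly more than the paper writes down explicitly.
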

\begin{proof}
By Proposition \ref{dah}, we can solve the Dirichlet problem \eqref{dirh} for the equation of upwards-oriented $\cH$-graphs in $\R^{n+1}$, on a sufficiently small ball $\Omega\subset \R^n$, with constant Dirichlet data $\varphi$ on the boundary. Since $\cH$ is rotationally invariant, the graph $\Sigma$ of the solution is a rotational $\cH$-hypersurface in $\R^{n+1}$ with unit normal $e_{n+1}$ at the origin. Since the equation in \eqref{dirh} is invariant by additive constants, the uniqueness of $\Sigma$ in these conditions is immediate from the maximum principle. The same argument can be done for \emph{downwards-oriented} $\cH$-graphs in $\R^{n+1}$, what completes the proof of Lemma \ref{canoex}.
\end{proof}
Lemma \ref{canoex} has the following consequence for our analysis of the phase space $\Theta_{\ep}$.

\begin{cor}\label{ejefase}
Assume that $\mathfrak{h}(\delta)\neq 0$ for some $\delta \in \{-1,1\}$, and let $\ep\in \{-1,1\}$ such that $\ep \mathfrak{h}(\delta)>0$. Then, there exists a unique orbit in $\Theta_{\ep}$ that has $(0,\delta)\in \overline{\Theta_{\ep}}$ as an endpoint. There is no such an orbit in $\Theta_{-\ep}$.
\end{cor}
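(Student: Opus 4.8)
The plan is to identify the orbit requested in the statement with the rotational $\cH$-hypersurface produced by Lemma \ref{canoex}, and then to read off the correct sign $\ep$ directly from the geometry of its tip. First I would apply Lemma \ref{canoex} with the given $\delta$ to obtain a rotational $\cH$-hypersurface $\Sigma$ meeting the rotation axis orthogonally at a point $p$ with unit normal $\delta e_{n+1}$. Parametrizing its profile curve $\alfa(s)=(x(s),z(s))$ by arc length with the tip at $s=0$ and $x(s)>0$ for $s>0$, the smoothness of $\Sigma$ at $p$ forces the profile curve to meet the axis perpendicularly, so $x(0)=0$, $z'(0)=0$ and $x'(0)=\pm 1$; since $\nu=x'$ and $\nu(p)=\esiz \delta e_{n+1},e_{n+1}\esde=\delta$, we get $x'(0)=\delta$. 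Hence $(x(s),y(s))=(x(s),x'(s))$ is an orbit of \eqref{1ordersys} whose endpoint, as $s\to 0^+$, is $(0,\delta)\in\overline{\Theta_{\ep}}$.

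Next I would determine which phase space $\Theta_{\ep}$ this orbit lives in by examining the tip. By rotational symmetry the point $p$ is umbilic, so all principal curvatures agree there and coincide with $H_{\Sigma}(p)=\mathfrak{h}(\nu(p))=\mathfrak{h}(\delta)$, which is nonzero by hypothesis. From \eqref{pricu}, $\kappa_i=z'/x$ (for $i\geq 2$) tends to this common value as $s\to 0^+$; since $x>0$, this forces ${\rm sign}(z'(s))={\rm sign}(\mathfrak{h}(\delta))$ for small $s>0$. As $\ep:={\rm sign}(z')$, we conclude $\ep={\rm sign}(\mathfrak{h}(\delta))$, i.e. $\ep\mathfrak{h}(\delta)>0$, so the orbit lies in precisely the $\Theta_{\ep}$ singled out in the statement.

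For the uniqueness inside $\Theta_{\ep}$ and the non-existence inside $\Theta_{-\ep}$, I would run this correspondence backwards. Any orbit in some $\Theta_{\ep'}$ with endpoint $(0,\delta)$ reconstructs, via $z'=\ep'\sqrt{1-y^2}$ and integration (determined up to a vertical translation), a rotational $\cH$-hypersurface meeting the axis orthogonally at a tip with unit normal $\delta e_{n+1}$; here the condition $y\to\delta=\pm 1$ guarantees $z'\to 0$, so the profile curve closes up smoothly. By the uniqueness in Lemma \ref{canoex}, every such hypersurface coincides with $\Sigma$ up to a vertical translation, and since translations leave the orbit unchanged (the phase space records only $x$ and $y=x'$), the orbit is unique, which gives the uniqueness claim in $\Theta_{\ep}$. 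Moreover, the sign computation above applies verbatim to any such hypersurface and forces $\ep'={\rm sign}(\mathfrak{h}(\delta))=\ep$, so no orbit with endpoint $(0,\delta)$ can exist in $\Theta_{-\ep}$.

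The routine parts here are the invocation of Lemma \ref{canoex} and the bookkeeping with vertical translations. The step I expect to require the most care is the sign determination at the tip: rigorously establishing that $z'/x$ converges to the common principal curvature $\mathfrak{h}(\delta)$, equivalently controlling the asymptotics of the orbit as $(x,y)\to(0,\delta)$ so as to pin down ${\rm sign}(z')$. A direct alternative would be to analyze \eqref{yfuncx} near the corner $(0,\delta)$ and check that a solution with $y\to\delta$ as $x\to 0^+$ is compatible only when $\ep\mathfrak{h}(\delta)>0$, but the umbilic-tip argument is cleaner. One should also make sure the smooth closure of the profile curve at the axis is justified, though this is standard for rotational hypersurfaces meeting their axis orthogonally.
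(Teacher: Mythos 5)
Your argument is correct and follows essentially the same route as the paper: produce $\Sigma$ from Lemma \ref{canoex}, use umbilicity of the tip together with \eqref{pricu} to force ${\rm sign}(z')={\rm sign}(\mathfrak{h}(\delta))$ near the axis (hence $\ep\mathfrak{h}(\delta)>0$), and obtain both uniqueness in $\Theta_{\ep}$ and non-existence in $\Theta_{-\ep}$ from the uniqueness statement of Lemma \ref{canoex}. The only nit is that for $\delta=-1$ your conventions ``$x(s)>0$ for $s>0$'' and ``$x'(0)=\delta$'' are incompatible, so the tip must then be approached as $s\to 0^-$; the paper handles this bookkeeping by allowing the profile curve to be defined on $[0,s_0)$ or on $(-s_0,0]$.
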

\begin{proof}
Let $\Sigma$ be the rotational $\cH$-hypersurface given for $\delta$ by Lemma \ref{canoex} (the relation between $\cH$ and $\mathfrak{h}$ is given by \eqref{presim0}).  Let $\alfa(s)=(x(s),z(s))$ be the profile curve of $\Sigma$, defined for $s\in [0,s_0)$ or $s\in (-s_0,0]$ depending on the orientation chosen on $\alfa$, and assume that $x(0)=z'(0)=0$, i.e. $s=0$ corresponds to the point $p_0$ of orthogonal intersection of $\Sigma$ with its rotation axis. Since $p_0$ is an umbilical point of $\Sigma$, all principal curvatures of $\Sigma$ at $p_0$ are equal and of the same sign as $\mathfrak{h}(\delta)$.

By \eqref{pricu}, the geodesic curvature of $\alfa(s)$ at $s=0$ is non-zero, and thus the sign of $z'(s)$, which will as usual be denoted by $\ep$, is constant for $s$ small enough. It follows then again by \eqref{pricu} that $\ep \mathfrak{h}(\delta)>0$. Consequently, the profile curve $\alfa(s)$ generates an orbit in the phase space $\Theta_{\ep}$ with $(0,\delta)$ as an endpoint. Uniqueness of this orbit follows from the uniqueness in Lemma \ref{canoex}. It is also clear from the argument that such an orbit cannot exist in $\Theta_{-\ep}$, because of the condition $\ep \mathfrak{h}(\delta)>0$.
\end{proof}

The previous corollary can be used to describe geometrically the compact, rotational $\cH$-hypersurfaces immersed in $\R^{n+1}$ that are diffeomorphic to $\S^n$. 

\begin{teo}\label{teoes}
Let $\Sigma$ be an immersed, rotational $\cH$-hypersurface in $\R^{n+1}$ diffeomorphic to $\S^n$, for $\cH\in C^1(\S^n)$ rotationally symmetric. Then $\Sigma$ is a strictly convex sphere.
\end{teo}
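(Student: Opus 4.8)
The plan is to use the phase space machinery of this section, together with Corollary \ref{ejefase}, to show that $\Sigma$ is strictly convex. Since $\Sigma$ is compact, rotational, and diffeomorphic to $\S^n$, its profile curve $\alfa(s)=(x(s),z(s))$ must meet the rotation axis orthogonally at exactly two points, corresponding to the north and south poles of $\Sigma$. At each such pole, $\Sigma$ is umbilic, so the mean curvature there equals $\pm|\mathfrak{h}(\delta)|$ for $\delta=\pm 1$; in particular $\mathfrak{h}(\delta)\neq 0$ at both poles. First I would apply Corollary \ref{ejefase} at each pole to produce the corresponding orbit in the appropriate phase space $\Theta_{\ep}$, with endpoint $(0,\delta)\in\overline{\Theta_{\ep}}$. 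The whole of $\Sigma$ is then encoded by a single orbit (or a concatenation of orbits) in the phase space that starts at $(0,\delta_1)$ and ends at $(0,\delta_2)$.

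Next I would argue that strict convexity amounts to showing that $\kappa_1$ and $\kappa_i$ $(i\ge 2)$ never vanish and have the same sign along the whole profile curve. By \eqref{signk}, $\mathrm{sign}(\kappa_i)=\ep$ for $i\ge 2$, so on any maximal interval $J$ where $z'(s)\neq 0$ the sign of these principal curvatures is simply the sign of $z'$. The key point is to control the sign of $\kappa_1=-\ep\,y'(s)$, i.e. the monotonicity behavior of the orbit, by locating it relative to the curve $\Gamma_{\ep}$ from \eqref{graga} using Lemma \ref{profas}. The strategy is to show that the orbit emanating from a pole stays in a single monotonicity region, so that $y(s)$ is monotone and $\kappa_1$ keeps a constant sign matching that of $\kappa_i$. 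Concretely, near the pole where the normal is $\delta e_{n+1}$, the orbit leaves $(0,\delta)$ into the region where $x_0<\Gamma_{\ep}(y_0)$, and one checks via \eqref{yfuncx} and Lemma \ref{profas} that $y(s)$ decreases monotonically from $\delta$ toward the other pole value $-\delta$, forcing $x'=y$ to traverse the full interval so that the orbit realizes a convex profile curve.

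The main obstacle I anticipate is ruling out that the orbit ever crosses $\Gamma_{\ep}$ or the equilibrium $e_0$, since such a crossing would change the sign of $y'$ and hence of $\kappa_1$, destroying convexity. To handle this I would use the uniqueness remark following \eqref{equil}: an orbit can only reach $e_0$ as $s\to\pm\infty$, so a compact profile curve closing up at two poles cannot pass through or asymptote to the equilibrium. I would then show that the orbit cannot touch $\Gamma_{\ep}$ either, for at a point of $\Gamma_{\ep}$ we have $y'=0$ with a local extremum of $y(x)$ by part (4) of Lemma \ref{profas}; combined with the direction of the flow forced by part (1), the orbit issuing from $(0,1)$ must decrease in $y$ all the way to $(0,-1)$ without an interior extremum, which is incompatible with the compactness and the single-pole structure unless $y$ is strictly monotone. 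Thus $\kappa_1$ has constant sign equal to $\ep$ throughout, giving that all principal curvatures are nonzero and of the same sign, i.e. $\Sigma$ is strictly convex.

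Finally, once strict convexity is established, the remark in the introduction that a compact strictly convex hypersurface has the Gauss map as a global diffeomorphism and bounds a strictly convex body shows that $\Sigma$ is a strictly convex sphere, completing the proof.
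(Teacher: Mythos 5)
Your overall strategy (encode the profile curve as an orbit, use Corollary \ref{ejefase} at the poles, and control the sign of $\kappa_1$ through the monotonicity regions) is the same as the paper's, but there are genuine gaps. The most serious one is at the very start: you assert that umbilicity at the poles gives $\mathfrak{h}(\delta)\neq 0$, which is a non sequitur --- an umbilic point can perfectly well have all principal curvatures equal to zero. The paper's first step is to invoke Proposition 2.6 of \cite{BGM}, which says that the existence of \emph{any} compact $\cH$-hypersurface forces $\cH$ to be nowhere vanishing; after a change of orientation one may assume $\mathfrak{h}>0$ on all of $[-1,1]$. This is not a cosmetic point: if $\mathfrak{h}$ were allowed to vanish somewhere in $(-1,1)$, the curve $\Gamma_1$ would be disconnected and the phase space would have more than four monotonicity regions (see Figure \ref{figesfa}), and your claim that the orbit ``stays in a single monotonicity region'' would have no basis. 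Without $\mathfrak{h}>0$ everywhere, none of the subsequent phase-plane bookkeeping goes through.

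Two further steps are asserted rather than proved. First, your exclusion of a crossing of $\Gamma_{\ep}$ is circular: you say the orbit ``must decrease in $y$ all the way to $(0,-1)$\ldots unless $y$ is strictly monotone,'' which presupposes the conclusion. The paper's actual argument is that if the orbit meets $\Gamma$ it is forced into $\Lambda_3$ and then $\Lambda_4$, where the monotonicity directions leave it no option but to converge to the equilibrium $e_0$; since convergence to $e_0$ requires $s\to\pm\infty$, this contradicts compactness. (One must also separately rule out endpoints $(0,y)$ with $|y|<1$, via regularity of $\Sigma$ at the axis, which you do not address.) Second, you never show that the orbit from $(0,1)$ actually terminates at $(0,-1)$ rather than at some $(x_1,-1)$ with $x_1>0$, in which case the profile would turn around into the other phase space $\Theta_{-1}$ and convexity could fail. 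The paper closes this by producing the second orbit $\sigma$ emanating from $(0,-1)$, observing that $\gamma$ and $\sigma$ are disjoint graphs $x=g(y)$ over $[-1,1]$ unless one of their free endpoints lies on the axis, and concluding $\gamma=\sigma$ by the uniqueness in Corollary \ref{ejefase}. Finally, a small sign slip: near $(0,1)$ the orbit lies in $\{x>\Gamma_1(y)\}$ (the region $\Lambda_1$, where $y(x)$ decreases), not in $\{x<\Gamma_1(y)\}$ as you wrote; in the latter region $y(x)$ would be increasing and $\kappa_1$ negative at the pole, contradicting umbilicity with positive mean curvature.
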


\begin{proof}
By Proposition 2.6 in \cite{BGM}, the existence of a compact $\cH$-hypersurface $\Sigma$ for some $\cH\in C^1(\S^n)$ implies that $\cH$ never vanishes. Thus, up to a change of orientation, we can assume that $\cH>0$ in our situation.

Let $\alfa(s)=(x(s),z(s))$ be the profile curve of $\Sigma$, and let $\mathfrak{h}\in C^1([-1,1])$ be given by \eqref{presim0}. Since $\mathfrak{h}>0$ by the previous argument, the curve $\Gamma_{\ep}:=\Theta_{\ep}\cap \{x=\Gamma_{\ep} (y)\}$ where $\Gamma_{\ep}(y)$ is given by \eqref{graga} does not exist for $\ep=-1$, and for $\ep=1$, $\Gamma:=\overline{\Gamma_{\ep}}$ is a compact connected arc with endpoints $(0,1)$ and $(0,-1)$. Hence, we have four monotonicity regions $\Lambda_1,\dots,\Lambda_4$ inside $\Theta_1$, with monotonicities given by Lemma \ref{profas}, and an equilibrium $e_0$; see Figure \ref{figestre}. 

\begin{figure}[h]
\begin{center}
\includegraphics[width=.55\textwidth]{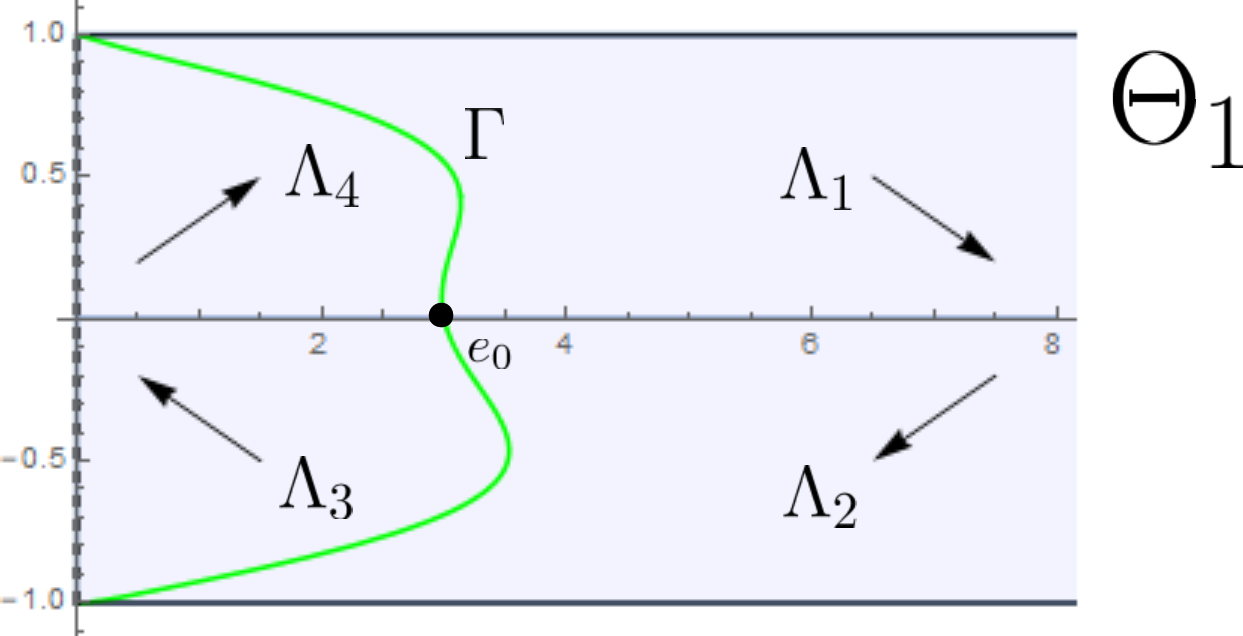}
\end{center}
\caption{The phase space $\Theta_1$, showing the monotonicity direction of each region $\Lambda_i$.}
\label{figestre}
\end{figure}

By Corollary \ref{ejefase}, there exists an orbit $\gamma$ in the phase space $\Theta_1$ that has $(0,1)$ as an endpoint; it corresponds to an open subset of $\Sigma$ that contains a point of $\Sigma$ with unit normal $e_{n+1}$, at which $\Sigma$ meets its rotation axis orthogonally. By the monotonicity properties of the regions $\Lambda_i$, the curve $\gamma$ lies in the region $\Lambda_1$ for all points near $(0,1)$.
By the regularity and compactness of $\Sigma$, the curve $\gamma$ cannot approach any point of the form $(0,y)$ with $y\in (-1,1)$. Indeed, if $\gamma$ had such an endpoint, $\Sigma$ would be asymptotic to its rotation axis (this corresponds to $y=0$), or it would present a non-removable isolated singularity around some point touching its rotation axis ($y\neq 0$), and these situations cannot happen. Also, it cannot happen that $\gamma$ converges asymptotically to the equilibrium $e_0$, since in that case, as explained after \eqref{equil}, the parameter $s$ would converge to $\pm \8$, and this 
contradicts the compactness of $\Sigma$. Since $\gamma$ cannot self-intersect, it becomes then clear from the monotonocity properties of the phase space that there are only two possible behaviors for $\gamma$:
 \begin{enumerate}
 \item[i)] If $\gamma$ intersects $\Gamma$ at some point in $\Theta_1$, then $\gamma$ enters at some moment in the region $\Lambda_3$ (and hence, afterwards it also enters in $\Lambda_4$), and so it has to converge asymptotically to the equilibrium $e_0$ of $\Theta_1$, which is not possible, as explained above.
  \item[ii)] Otherwise, $\gamma$ stays in $\Lambda_1 \cup \Lambda_2\cup \{y=0\}$, and it is a graph of the form $x=g(y)>0$, with $y\in (y_0,1)$ for some $y_0\in [-1,1)$. Again by compactness of $\Sigma$, we must have $y_0=-1$. Thus, $\gamma$ can be extended to be a compact graph $x=g(y)\geq 0$ for $y\in [-1,1]$, and it has a second endpoint at some $(x_1,-1)$ with $x_1\geq 0$.
  \end{enumerate}

Next, we can repeat all the argument above, and obtain a second orbit $\sigma$ in $\Lambda_1\cup \Lambda_2\cup \{y=0\} \subset \Theta_1$, that has $(0,-1)$ as an endpoint, and which corresponds to a piece of $\Sigma$ where it intersects its rotation axis with unit normal $-e_{n+1}$. We conclude that $\sigma$ can be extended to be a graph $x=t(y)\geq 0$ for $y\in [-1,1]$, with a second endpoint at some $(x_2,1)$ with $x_2\geq 0$. Since $\gamma$ and $\sigma$ cannot intersect on $\Theta_1$, the only possibility is that $x_1=0$ or $x_2=0$. Thus, by the uniqueness property of Corollary \ref{ejefase}, we have $\gamma=\sigma$, which is then an orbit in $\Theta_1$ joining $(0,1)$ with $(0,-1)$. 

By \eqref{1ordersys}, and since $\gamma$ stays in the region $\Lambda_1\cup \Lambda_2\cup \{y=0\}$, it follows that $y'(s)<0$ for all $s$. Consequently, by \eqref{signk} we see that all principal curvatures of $\Sigma$ are positive at every point, and so $\Sigma$ is a strictly convex sphere in $\R^{n+1}$.

\end{proof}

Theorem \ref{teoes} does not hold if the function $\cH\in C^1(\S^n)$ is merely assumed to be in $C^0(\S^n)$. Indeed, let $S$ be a smooth strictly convex radial graph in $\R^{n+1}$ over a closed ball $\overline{B}_{\rho}\subset \R^n$ of a certain radius $\rho>0$, with $\parc S =\parc \overline{B}_{\rho}\times \{0\}.$ Let $S_t$ be obtained from $S$ by a symmetry with respect to the $x_{n+1}=0$ hyperplane and a translation of vector $(0,\dots, 0,t)$, $t>0$, and let $C_t$ denote the compact (with boundary) vertical cylinder $\parc \overline{B}_{\rho}\times [0,t]$ in $\R^{n+1}$. Assume that $\Sigma_t:=S\cup C_t \cup S_t$ defines a smooth rotational hypersurface in $\R^{n+1}$. Then, it is easy to see that if two points $p_1,p_2\in \Sigma_t$ have the Gauss map image in $\S^n$, they also have the same mean curvature at those points. That is, $\Sigma_t$ satisfies equation \eqref{presH} for some rotationally symmetric $\cH\in C^0(\S^n)$. Clearly, $\Sigma_t$ is a rotational $\cH$-hypersurface that bounds a convex set of $\R^{n+1}$, but that is not strictly convex, see Figure \ref{esfnoconvex}.

\begin{figure}[h]
\begin{center}
\includegraphics[width=.25\textwidth]{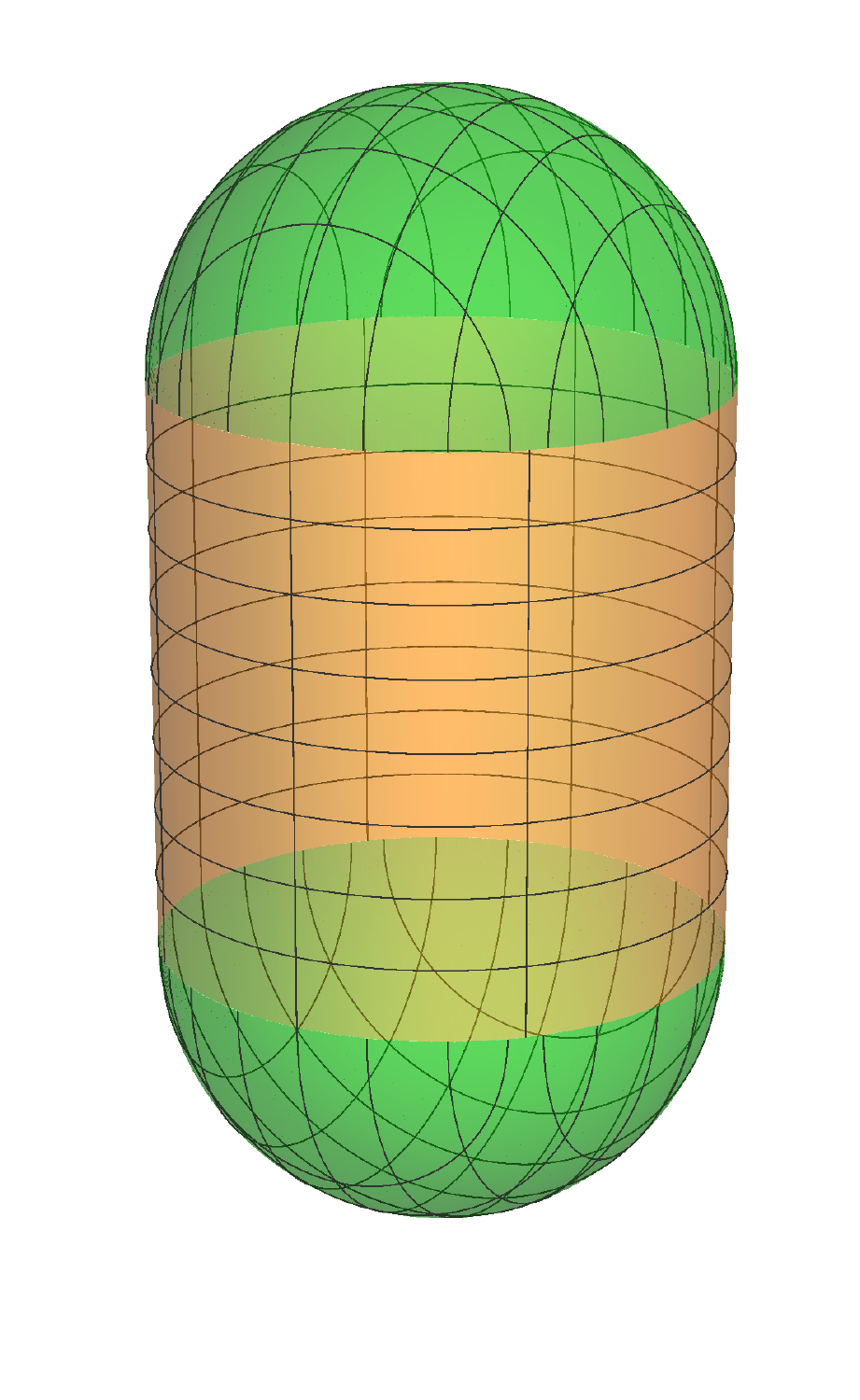}\hspace{2cm}\includegraphics[width=.25\textwidth]{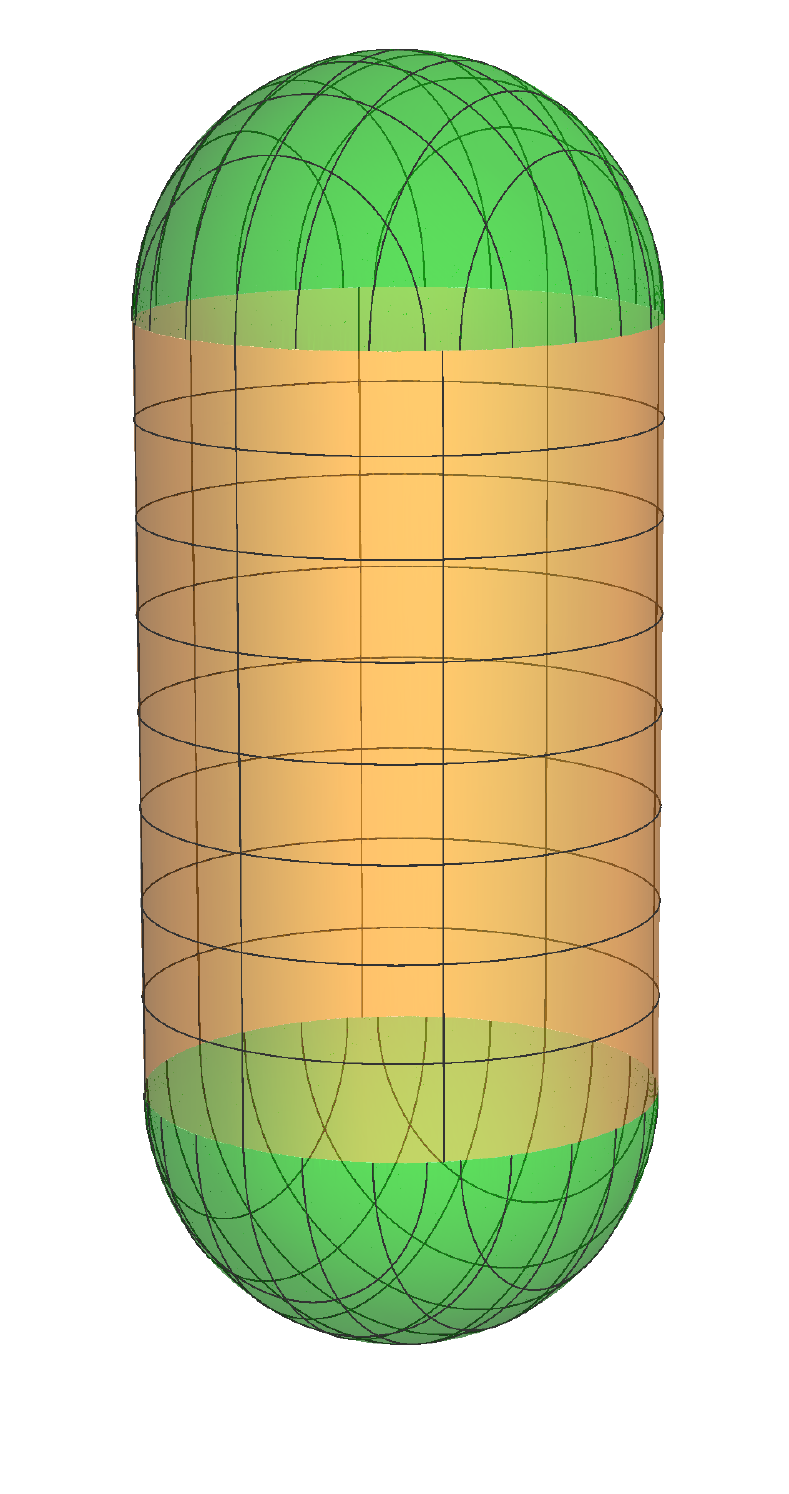}
\end{center}
\caption{Two rotational, not strictly convex, $\cH$-spheres in $\R^3$ for an adequate rotationally symmetric $\cH\in C^0(\S^2)$.}
\label{esfnoconvex}
\end{figure}

\section{$\mathfrak{h}$-bowls and $\mathfrak{h}$-catenoids in $\R^{n+1}$}\label{sec:bowls}

In this section we show examples of properly embedded rotational $\cH$-hypersurfaces in $\R^{n+1}$, for very general choices of the radially symmetric function $\cH\in C^1(\S^n)$. As before, we let $\mathfrak{h}\in C^1([-1,1])$ be given in terms of $\cH$ by \eqref{presim0}. First, we will construct examples of entire, strictly convex, $\cH$-graphs. In analogy with the theory of self-translating solitons of the mean curvature flow, we will call them \emph{$\mathfrak{h}$-bowls.} 

\begin{pro}\label{hbowls}
Let $\ch\in C^1([-1,1])$ be given by \eqref{presim0} in terms of $\cH$, and suppose that there exists $y_0\in [0,1]$ (resp. $y_0\in [-1,0]$) such that $\ch(y_0)=0$. Then there exists an upwards-oriented (resp. downwards-oriented) entire rotational $\cH$-graph $\Sigma$ in $\R^{n+1}$. Moreover, $\Sigma$ is either a horizontal hyperplane, or a strictly convex graph.
\end{pro}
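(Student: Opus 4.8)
The plan is to prove the upward statement and deduce the downward one by reflecting in a horizontal hyperplane $x_{n+1}\mapsto -x_{n+1}$, which turns an upward-oriented $\ch$-graph into a downward-oriented $\tilde\ch$-graph with $\tilde\ch(t)=-\ch(-t)$, so that a zero of $\ch$ in $[-1,0]$ becomes a zero of $\tilde\ch$ in $[0,1]$. For the upward case I would split according to the sign of $\ch(1)$. If $\ch(1)=0$, I would take $\Sigma$ to be a horizontal hyperplane with upward orientation: then $\nu\equiv 1$ and $H_\Sigma\equiv 0=\ch(1)=\ch(\nu)$, so $\Sigma$ is an entire upward-oriented $\cH$-graph and we land in the hyperplane alternative.

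Assume then $\ch(1)\neq 0$, and put $\ep:=\mathrm{sign}(\ch(1))$. By hypothesis $\ch$ vanishes in $[0,1]$, hence in $[0,1)$; let $y_*:=\max\{y\in[0,1):\ch(y)=0\}$, so that $\ep\,\ch>0$ on $(y_*,1]$. The first step is to produce the orbit: by Corollary \ref{ejefase} with $\delta=1$ there is a unique orbit $\gamma$ of \eqref{1ordersys} in $\Theta_\ep$ with endpoint $(0,1)$, namely the one generated by the rotational $\cH$-surface $\Sigma$ of Lemma \ref{canoex} meeting the axis orthogonally with upward unit normal at its vertex. Since $\Sigma$ is umbilic at the vertex with common principal curvature $\ch(1)\neq 0$ of sign $\ep$, \eqref{signk} forces $y'(s)<0$ just after the vertex; by Lemma \ref{profas} this places $\gamma$ in the region
$$R:=\{(x,y)\in\Theta_\ep: y_*<y<1,\ x>\Gamma_\ep(y)\},$$
which is well defined because $\Gamma_\ep$ from \eqref{graga} is finite and positive on $(y_*,1]$ and satisfies $\Gamma_\ep(y)\to+\infty$ as $y\to y_*^+$.

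The heart of the proof is to show that $\gamma$ stays in $R$ and escapes to $x=+\infty$. On the left boundary $\Gamma_\ep$ the field \eqref{1ordersys} is horizontal and points rightwards ($x'=y>0$), so $R$ is positively invariant and $\gamma\subset R$; there $x(s)$ is strictly increasing and $y(s)$ strictly decreasing, whence $x\to x_1\in(0,+\infty]$ and $y\to y_\infty\in[y_*,1)$. I would then rule out $y_\infty>y_*$ as follows: on such a range $\ep\,\ch(y)\sqrt{1-y^2}$ is bounded below by a positive constant while $\gamma$ lies to the right of $\Gamma_\ep$, so \eqref{1ordersys} gives $y'\le -c<0$; this is incompatible with $y$ tending to a finite limit unless $\gamma$ converges to an interior point of $\Theta_\ep$, but such a limit point would satisfy $y_\infty>0$ and hence be a regular (non-equilibrium) point of the field, to which no orbit may converge. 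Therefore $y_\infty=y_*$, and since $x>\Gamma_\ep(y)\to+\infty$ we get $x_1=+\infty$.

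It then remains to read off the geometry. Because $x'=y>0$ along $\gamma$ and $x$ ranges over all of $[0,\infty)$ (completeness being clear from $\int_0^\infty dx/y\ge\int_0^\infty dx=\infty$), the profile curve is a graph $z=z(x)$ over $[0,\infty)$ and $\Sigma$ is an entire rotational graph over $\R^n$, upward-oriented since $\nu=y>0$. Finally $y'(s)<0$ throughout $R$, so \eqref{signk} yields $\mathrm{sign}(\kappa_1)=\mathrm{sign}(-\ep y')=\ep$ and $\mathrm{sign}(\kappa_i)=\ep$ for $i\ge 2$, with all principal curvatures nonzero; hence $\Sigma$ is strictly convex. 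The step I expect to be the main obstacle is precisely the trapping-and-escape argument of the previous paragraph: showing that $\gamma$ neither turns back across $\Gamma_\ep$ nor stalls at an interior point, so that it truly runs off to $x=+\infty$ with $y\to y_*$. The one-sided invariance of $R$ together with the exclusion of interior limit points is what guarantees that the graph is entire; the local existence at the vertex and the signs of the principal curvatures are then routine consequences of the earlier lemmas.
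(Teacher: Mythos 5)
Your argument is correct and follows essentially the same route as the paper: the hyperplane case when $\ch(1)=0$; otherwise the orbit of Corollary \ref{ejefase} emanating from $(0,1)$, trapped in the monotonicity region to the right of $\Gamma_{\ep}$ above the largest zero $y_*$ of $\ch$, which forces it to run off to $x=+\8$ as a decreasing graph $y=r(x)>y_*\geq 0$, so that the resulting hypersurface is an entire, strictly convex graph by \eqref{signk}. The only cosmetic differences are that you treat $\ch(1)<0$ directly in $\Theta_{-1}$ and the downward case by reflection, where the paper reduces both to the case $\ch(1)>0$ by a change of orientation, and that you spell out the escape-to-infinity step that the paper compresses into the properness of the orbits.
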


\begin{proof}
If $\ch(1)=0$ (resp. $\ch(-1)=0$), $\Sigma$ can be chosen to be an upwards-oriented (resp. downwards-oriented) horizontal hyperplane in $\R^{n+1}$, and the result is trivial.

Consider next the case that $\ch(1)>0$, and let $y_0\in [0,1]$ be the largest value of $y$ such that $\ch(y)=0$. 
Since $\ch(y)>0$ on $(y_0,1]$ and $\ch(y_0)=0$, the horizontal graph $\Gamma:=\Theta_1 \cap \{x=\Gamma_1(y)\}$ defined by \eqref{graga} has a connected component given by the restriction of $\Gamma_1 (y)$ to the interval $(y_0,1]$, and satisfies $\Gamma_1(1)=0$ and $\Gamma_1(y_0)\to \8$.

Define now $\Lambda\subset\Theta_1$ by $\Lambda=\{(x,y)\in\Theta_1:  y>y_0\}$, and let $\Lambda^+:=\{(x,y)\in\Lambda;\ x>\Gamma_1(y)\}$  and $\Lambda^-:=\{(x,y)\in\Lambda;\ x<\Gamma_1(y)\}$. These components $\Lambda^+$ and $\Lambda^-$ are the only connected components of $\Lambda\setminus \Gamma$, and they have $\Gamma$ as their common boundary. Moreover, $\Lambda^+,\Lambda^-$ are monotonicity regions of $\Theta_1$, and by Lemma \ref{profas} each orbit $y=y(x)$ in $\Lambda^+$ (resp. $\Lambda^-$) satisfies that $y'(x)<0$ (resp. $y'(x)>0$); see Figure \ref{fig:dosff}.

\begin{figure}[h]
\begin{center}
\includegraphics[width=.55\textwidth]{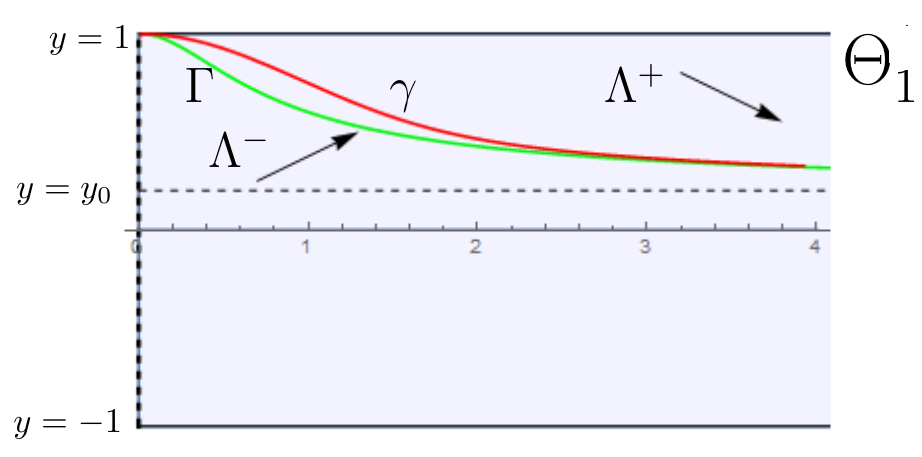}
\end{center}
\caption{The phase space $\Theta_1$, showing the monotonicity directions of $\Lambda^+$, $\Lambda^-$.}
\label{fig:dosff}
\end{figure}

Let now $\Sigma$ be the upwards-oriented rotational $\cH$-graph in $\R^{n+1}$ constructed in Lemma \ref{canoex}, and let $\gamma$ denote the orbit in $\Theta_1$ associated to the profile curve $\alfa(s)=(x(s),z(s))$ of $\Sigma$, which has an endpoint at $(0,1)$; see Corollary \ref{ejefase}. By the monotonicity properties explained above, $\gamma$ lies in $\Lambda^+$ for points near $(0,1)$. By the same monotonicity properties, and using item 4 of Lemma \ref{profas}, we can conclude that  $\gamma$ is globally contained in $\Lambda^+$. Thus, by its monotonocity and properness, $\gamma$ can be seen as a graph $y=r(x)$, where $r\in C^1([0,\8))$ satisfies $r(0)=1$, $r(x)>y_0$ and $r'(x)<0$ for all $x>0$.

This implies that $\Sigma$ is an entire rotational graph in $\R^{n+1}$. Since $\gamma$ is totally contained in $\Theta_{\ep}$ for $\ep=1$, we have by \eqref{signk} that the principal curvatures $\kappa_2,\dots, \kappa_n$ of $\Sigma$ are everywhere positive. Moreover, since $\gamma$ does not leave the monotonicity region $\Lambda^+$, we conclude from \eqref{1ordersys} and \eqref{signk} that $\kappa_1$ is also everywhere positive. Thus, $\Sigma$ is a strictly convex entire graph. 

This concludes the proof in the case that $\ch(1)>0$ and $y_0\geq 0$. A similar argument works in the case that $\ch(-1)>0$ and $y_0\leq 0$. The remaining two cases, namely $\ch(1)<0$, $y_0\geq 0$ and $\ch(-1)<0$, $y_0\leq 0$, are reduced to the previous ones by a change of orientation. This completes the proof of Proposition \ref{hbowls}.
\end{proof}

\begin{figure}[h]
\begin{center}
\includegraphics[width=.3\textwidth,valign=c]{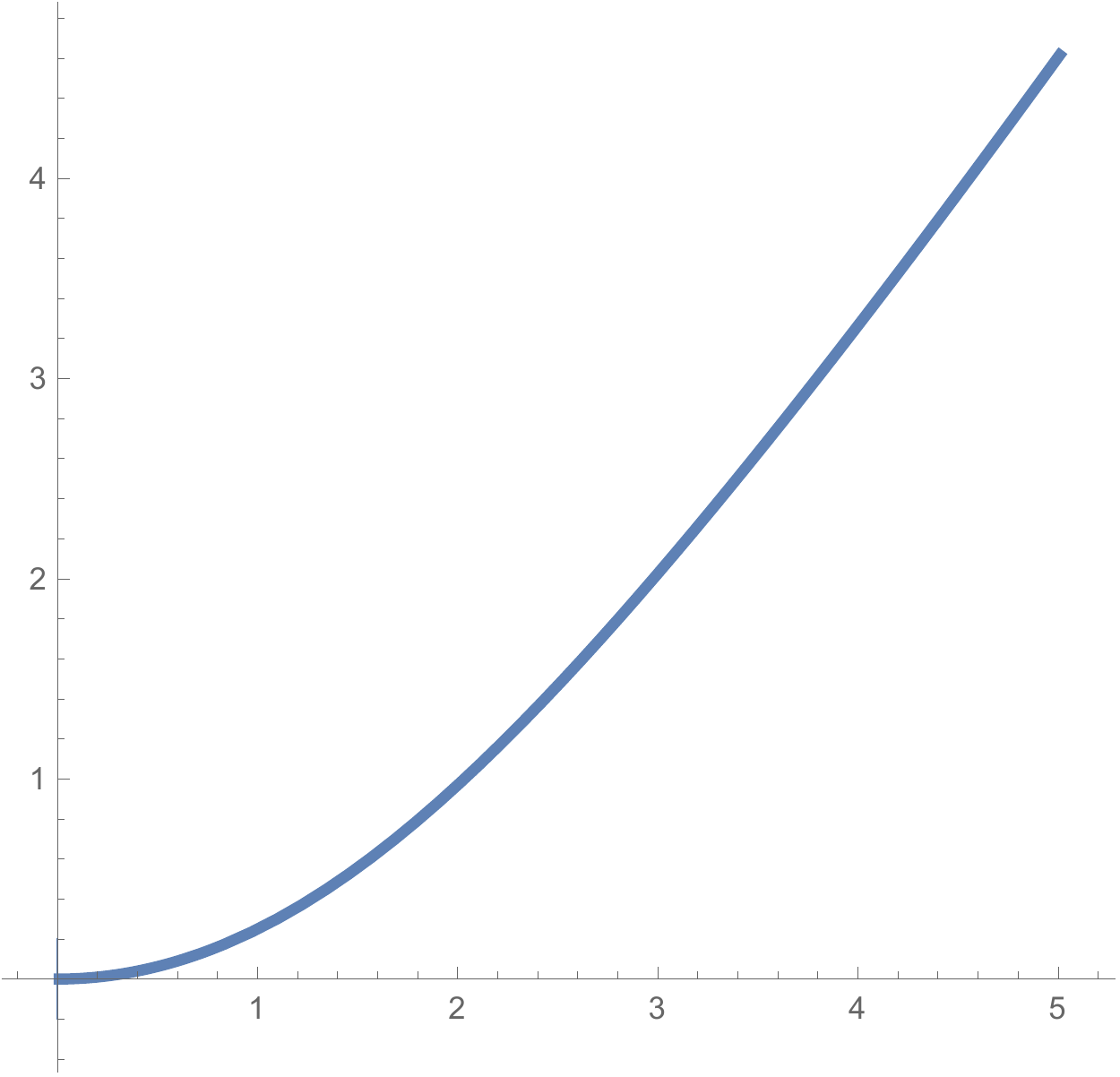}\hspace{1cm}
\includegraphics[width=.55\textwidth,valign=c]{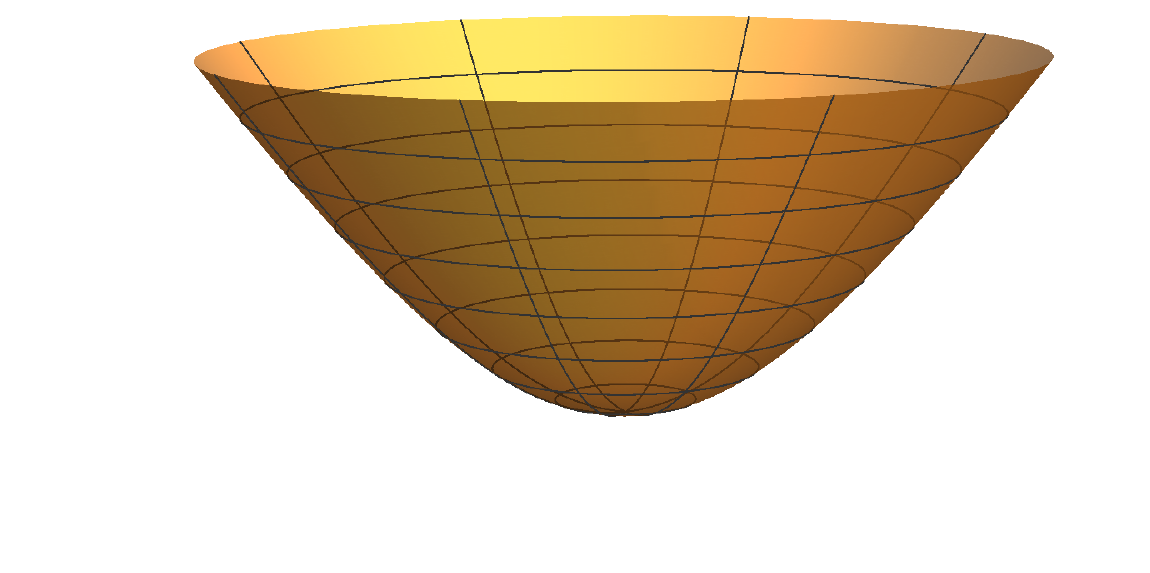}\quad 
\end{center}
\caption{Profile curve and graphic of the $\ch$-bowl in $\R^3$ for the choice $\ch(y)=y-1/2$.}
\label{fig:hbowl}
\end{figure}

The next result shows that there exist \emph{catenoid-type} rotational $\cH$-hypersurfaces for a large class of rotationally invariant choices of $\cH\in C^1(\S^n)$. 

\begin{pro}\label{hcats}
Let $\ch\in C^1([-1,1])$, $\ch\leq 0$, be given by \eqref{presim0} in terms of $\cH$, and suppose that $\ch(\pm 1)=0$.  Then, there exists a one-parameter family of properly embedded rotational $\cH$-hypersurfaces in $\R^{n+1}$ of strictly negative Gauss-Kronecker curvature at every point, and diffeomorphic to $\S^{n-1}\times \R$. Each of them is a bi-graph over $\R^n-\mathbb{B}_n(r_0)$, where $\mathbb{B}_n(r_0)=\{x\in \R^n : |x|<r_0\}$, for some $r_0>0$.
\end{pro}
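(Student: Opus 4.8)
The plan is to produce each $\ch$-catenoid as the rotational hypersurface generated by a single complete orbit of the autonomous system \eqref{1ordersys} in the phase space $\Theta_1$ (that is, with $\ep=1$), and to extract all of its geometric features directly from the phase portrait. I would first record the global structure of $\Theta_1$ under the hypothesis $\ch\leq 0$. With $\ep=1$ the second component of $F$ in \eqref{1ordersys} equals $(n-1)\frac{1-y^2}{x}+n|\ch(y)|\sqrt{1-y^2}$, which is strictly positive throughout $\Theta_1=(0,\8)\times(-1,1)$. Consequently $\Theta_1$ has no equilibria, the curve $\Gamma_1$ of \eqref{graga} is empty, and $y(s)$ is strictly increasing along every orbit. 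By Lemma \ref{profas} each orbit is then a graph $x=g(y)$ with $g'(y)=y/y'$; thus $g$ strictly decreases for $y<0$, strictly increases for $y>0$, and has a unique minimum precisely at the value of $s$ where the orbit meets $\{y=0\}$.

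To build the family, for each $r_0>0$ let $\gamma_{r_0}$ be the orbit of \eqref{1ordersys} through $(r_0,0)\in\Theta_1$. Uniqueness of solutions makes these orbits pairwise disjoint, so they constitute a genuine one-parameter family, with $r_0$ the minimal distance of $\gamma_{r_0}$ to the rotation axis. The analytic heart of the proof is to show that $\gamma_{r_0}$ is defined for all $s\in\R$ and that $x(s)\to\8$ at both of its ends. Since $|x'|=|y|\leq 1$ and $x\geq r_0>0$ along the orbit, there is no finite-parameter blow-up, and the only way the orbit could fail to be complete is by reaching the boundary $\{y=\pm1\}$ in finite parameter. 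This is where the hypothesis $\ch(\pm1)=0$ is decisive: near $y=1$, using $\ch(1)=0$ together with $x\geq r_0$, one obtains an estimate of the form $y'\leq C(1-y)$, so that $1-y$ decays at most exponentially in $s$ and $\{y=1\}$ can only be attained in the limit $s\to+\8$; the boundary $\{y=-1\}$ is handled symmetrically. Thus $\gamma_{r_0}$ is complete. Finally, on the branch $s>0$ one has $y(s)\geq\delta>0$ for $s$ bounded away from $0$, whence $x'=y\geq\delta$ and $x(s)\to\8$; the branch $s<0$ gives $x(s)\to\8$ as $s\to-\8$ in the same way.

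The geometric conclusions now follow by translating the phase portrait back to the profile curve $\alfa(s)=(x(s),z(s))$. Here $z'(s)=\ep\sqrt{1-y^2}=\sqrt{1-y^2}>0$, so $z$ is strictly increasing in $s$; together with the monotonicity of $x$ on each branch this exhibits the branches $s>0$ and $s<0$ as two graphs $z=z_+(x)$ and $z=z_-(x)$ over $x\in(r_0,\8)$, with $z_-(x)<z_+(x)$ for $x>r_0$ and $z_-(r_0)=z_+(r_0)$. Hence $\Sigma$ is a bi-graph over $\R^n-\mathbb{B}_n(r_0)$ whose two sheets meet along the neck sphere $\{|x|=r_0\}$; since $x\to\8$ at both ends the parametrization is proper, and the injectivity of $s\mapsto z(s)$ makes it an embedding, diffeomorphic to $\S^{n-1}\times\R$. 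Lastly, at every point one has $y\in(-1,1)$, so \eqref{signk} yields $\kappa_i=\ep=+1>0$ for $i=2,\dots,n$ and ${\rm sign}(\kappa_1)={\rm sign}(-\ep\,y')=-1$, whence the Gauss--Kronecker curvature $K=\kappa_1\kappa_2\cdots\kappa_n$ is strictly negative everywhere.

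I expect the main obstacle to be the completeness/escape argument of the second paragraph: the vector field $F$ degenerates along $\{y=\pm1\}$ (its second component vanishes there precisely because $\ch(\pm1)=0$), and one must show that each orbit approaches this degenerate boundary only asymptotically, with $x\to\8$, rather than meeting it at a finite value of $x$ --- which would instead force a change of sign of $\ep$ and a qualitatively different surface. Once this escape behaviour is established, the remaining assertions are routine consequences of the monotonicity regions of Lemma \ref{profas} and of the curvature formulas \eqref{pricu}--\eqref{signk}.
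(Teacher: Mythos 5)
Your proposal is correct and follows essentially the same route as the paper: the orbit of \eqref{1ordersys} through $(r_0,0)$ in $\Theta_1$, the emptiness of $\Gamma_1$ forcing $y'(s)>0$, the decisive use of $\ch(\pm1)=0$ with $\ch\in C^1$ to show the orbit can only reach $y=\pm1$ asymptotically with $x\to\8$, and the curvature signs from \eqref{signk}. The only cosmetic difference is that you establish the boundary non-attainment by a Gronwall-type estimate $y'\leq C(1-y)$, whereas the paper phrases the same fact as uniqueness of the Cauchy problem against the explicit solution $(x(s),y(s))=(s,\pm1)$; these are two renderings of the same idea.
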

\begin{proof}
Let $\Sigma$ be the rotational $\cH$-hypersurface in $\R^{n+1}$ generated by a unit speed curve $\alpha(s)=(x(s),z(s))$ that satisfies the initial conditions $x(0)=r_0>0$, $z(0)=0$ and $z'(0)=1$ for some $r_0$; i.e. $\alfa(s)$ corresponds (up to a vertical translation) to the unique solution of \eqref{1ordersys} with initial conditions $x(0)=r_0$, $y(0)=0$. Then, the orbit $\gamma$ of \eqref{1ordersys} associated to $\alfa(s)$ passes through $(r_0,0)$ and belongs to the phase space $\Theta_1$ around that point; i.e. $\varepsilon=1$ in \eqref{1ordersys}.

Observe that, since $\ch\leq 0$, the curve $\Gamma_1:=\Theta_1\cap \{x=\Gamma_1(y)\}$ with $\Gamma_1(y)$ given by \eqref{graga} does not exist (i.e. $\Gamma_1$ is empty). Thus, there are two monotony regions in $\Theta_1$, given by $\Lambda^+:=\Theta_1\cap \{y>0\}$ and $\Lambda^-=\Theta_1\cap \{y<0\}$. Any orbit $y=y(x)$ in $\Lambda^+$ (resp. $\Lambda^-$) satisfies that $y'(x)>0$ (resp. $y'(x)<0$). We should also note that, by the condition $\ch(\pm 1)=0$, and since $\ch\in C^1$, no orbit in $\Theta_1$ can have a limit point of the form $(x,\pm 1)$ for some $x>0$, since this would contradict uniqueness of the solution to the Cauchy problem for \eqref{1ordersys}; observe for this that $(x(s),y(s))=(s,\pm 1)$ is a solution to \eqref{1ordersys}, and that as $\ch\in C^1$ with $\ch(\pm 1)=0$,
the right-hand side of \eqref{1ordersys} is $C^1$ along $y=\pm 1$. 

Taking these properties into account, it is easy to deduce that the orbit $\gamma$ is given as a horizontal graph $x=r(y)$ for some $r\in C^1([a,b])$ with $a<0<b$, and so that $r(0)=r_0$, $r'(y)>0$ (resp. $r'(y)<0$) for all $y\in (0,b)$ (resp. for all $y\in (a,0)$), and $r(y)\to \8$ as $y\to \{a,b\}$. As a matter of fact, using \eqref{yfuncx}, it is easy to show that this is possible in our conditions only if $a=-1$, $b=1$. Thus, $\Sigma$ is a bi-graph in $\R^{n+1}$ over $\Omega:=\R^n-\mathbb{B}_n(r_0)$, with the topology of $\S^{n-1}\times \R$. That is, $\Sigma=\Sigma_1\cup \Sigma_2$ where both $\Sigma_i$ are graphs over $\Omega$ with $\parc \Sigma_i=\parc \Omega$, and $\Sigma_i$ meets the $x_{n+1}=0$ hyperplane orthogonally along $\parc \Sigma_i$.

By \eqref{1ordersys} we get $y'(s)>0$ for all $s$. So, by \eqref{signk}, we have that at every $p\in \Sigma$ the relations $\kappa_1<0$ and $\kappa_2=\dots = \kappa_n>0$ hold. In particular, the Gauss-Kronecker curvature of $\Sigma$ is negative at every point. This completes the proof.
\end{proof}

\begin{figure}[h]
\begin{center}
\includegraphics[width=.4\textwidth,valign=c]{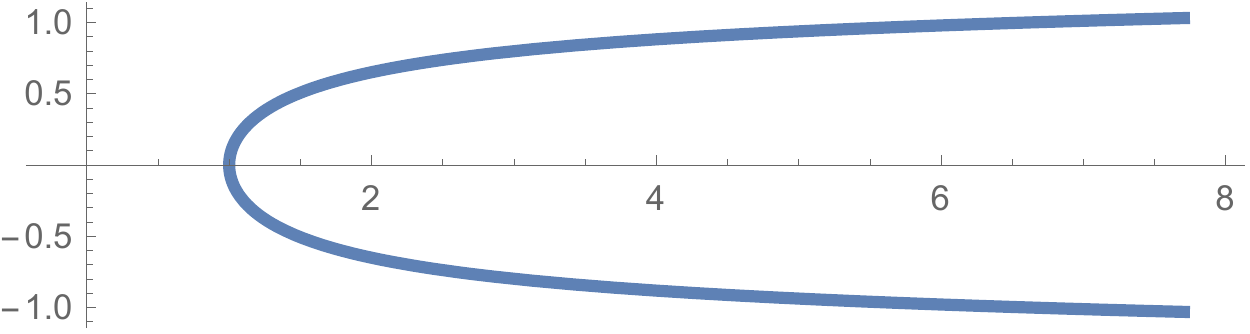}\quad 
\includegraphics[width=.55\textwidth,valign=c]{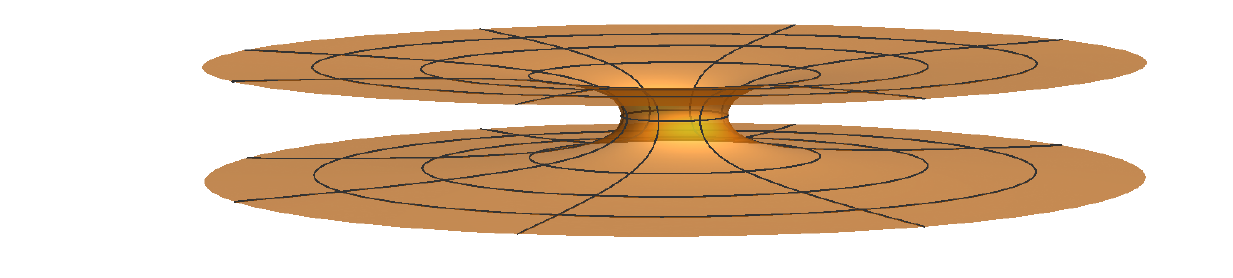}\quad 
\end{center}
\caption{Profile curve and graphic of the $\ch$-catenoid in $\R^3$, for $\ch(y)=y^2-1$.}
\label{fig:hcat}
\end{figure}

We remark that these examples resemble indeed the usual \emph{catenoids} in minimal hypersurface theory, and that Proposition \ref{hcats} recovers them for the choice $\cH=0$. However, while minimal catenoids can be given an explicit parametrization, this is not the case for the $\ch$-catenoids constructed in Proposition \ref{hcats}. Moreover, these $\ch$-catenoids can have different asymptotic behaviors, i.e. different growths at infinity, depending on the choice of the prescribed mean curvature function $\ch$. Understanding this asymptotic behavior can lead to proving \emph{half-space theorems} for properly immersed $\cH$-hypersurfaces in $\R^{n+1}$ for some rather general choices of rotationally symmetric functions $\cH$, by using the classical ideas of Hoffman and Meeks in \cite{HM} about properly immersed minimal surfaces in $\R^3$.  See e.g. \cite{B3} for some results in this direction.

\section{Classification: a Delaunay-type theorem}\label{sec:rot2}

Given $H>0$, a classical theorem by Delaunay shows that there are exactly four types of rotational hypersurfaces in $\R^{n+1}$ with constant mean curvature $H$, up to isometry: round spheres $\S^n(1/H)$, right circular cylinders $\S^{n-1}(r_0)\times \R$ of radius $r_0=(n-1)/(nH)$, a one-parameter family of properly embedded \emph{unduloids}, and a one-parameter family of non-embedded \emph{nodoids}. Both unduloids and nodoids are invariant under the discrete $\Z$-group generated by some vertical translation in $\R^{n+1}$.

In this section we will extend Delaunay's classification, and show that a similar description holds for rotational $\cH$-hypersurfaces in $\R^{n+1}$, in the case that $\cH\in C^1(\S^n)$ is positive, rotationally invariant and \emph{even}, i.e $\cH(x)=\cH(-x)$ for all $x\in \S^n$. In terms of the function $\ch$ associated to $\cH$ by \eqref{presim}, this means that $\ch(-y)=\ch(y)>0$ for all $y\in [-1,1]$. 

We should point out that, in contrast with the classical CMC case, for a prescribed function $\cH$ in the conditions above, the Delaunay hypersurfaces with prescribed $\cH$ cannot be given a general explicit description by integral formulas. We should also point out that this Delaunay-type classification is not true anymore if the prescribed mean curvature $\cH>0$ is not an even function on $\S^n$. In Section \ref{sec:nocmc} we will provide examples that support this claim.

\begin{teo}\label{dela}
Let $\ch\in C^1([-1,1])$ be given by \eqref{presim0} in terms of $\cH$, and suppose that $\ch(y)=\ch(-y)>0$ for all $y$. Then, the following list exhausts, up to vertical translations, all existing rotational $\cH$-hypersurfaces in $\R^{n+1}$ around the $x_{n+1}$-axis:
\begin{enumerate}
\item The right circular cylinder $C_{\cH}:=\S^{n-1}(r_0)\times \R$, where $r_0=(n-1)/(n\ch(0))$.

\item A strictly convex rotational $\cH$-sphere $S_{\cH}\subset \R^{n+1}$.

\item A one-parameter family of properly embedded \emph{$\mathfrak{h}$-unduloids}, $U_\mathfrak{h}$. 
\item A one-parameter family of properly immersed, non-embeded, \emph{$\mathfrak{h}$-nodoids}, $N_\mathfrak{h}$.
\end{enumerate}
Moreover, any $\ch$-unduloid or $\ch$-nodoid is invariant by a vertical translation in $\R^{n+1}$, diffeomorphic to $\S^{n-1}\times \R$, and lies in a tubular neighborhood of the $x_{n+1}$-axis in $\R^{n+1}$. The Gauss map image of each $\ch$-unduloid omits open neighborhoods of the north and south poles in $\S^n$. The Gauss map image of each $\ch$-nodoid is $\S^n$.
\end{teo}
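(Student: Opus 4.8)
The plan is to carry out, for the case $\ch(y)=\ch(-y)>0$, a complete description of the orbits of the autonomous system \eqref{1ordersys} in the phase space $\Theta_\ep$, and to translate each orbit type into one of the four announced families. First I would fix the structure of $\Theta_1$. Since $\ch>0$, the curve $\Gamma_{-1}$ of \eqref{graga} is empty, while $\Gamma:=\overline{\Gamma_1}$ is, by the evenness of $\ch$, a compact arc symmetric about $\{y=0\}$, joining the corners $(0,1)$ and $(0,-1)$ and meeting $\{y=0\}$ exactly at the equilibrium $e_0=(r_0,0)$, $r_0=(n-1)/(n\ch(0))$, given by \eqref{equil}; this $e_0$ is the cylinder $C_\cH$ of item 1. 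As in Theorem \ref{teoes}, $\Gamma$ and $\{y=0\}$ split $\Theta_1$ into four monotonicity regions around $e_0$, and Lemma \ref{profas} shows the flow circulates around $e_0$. In $\Theta_{-1}$ there is no equilibrium and $\Gamma_{-1}=\0$, so by \eqref{1ordersys} one has $y'(s)>0$ throughout. Because $\ch(\pm1)>0$, an orbit may reach the lines $y=\pm1$ at an interior point $x>0$; there the profile curve has $z'=0$ with $z''=n\ch(\pm1)\neq0$, so it crosses regularly from $\Theta_{\ep}$ to $\Theta_{-\ep}$. I would therefore glue $\Theta_1$ and $\Theta_{-1}$ along $\{y=1\}$ and $\{y=-1\}$ into a single phase space diffeomorphic to $(0,\8)\times\S^1$, on which the combined flow is well defined and complete away from $\{x=0\}$.

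The key structural input, which replaces the missing variational structure, is a reversibility coming from the evenness of $\ch$. The right-hand side of the $y$-equation in \eqref{1ordersys} is even in $y$, so the field is reversible under the reflection $R(x,y)=(x,-y)$, whose fixed-point set is $\{y=0\}$: if $(x(s),y(s))$ is an orbit, so is $(x(-s),-y(-s))$. Two consequences drive the whole classification. First, at $e_0$ the evenness forces $\ch'(0)=0$, so the linearization of \eqref{1ordersys} there has purely imaginary eigenvalues $\pm i\sqrt{n-1}/r_0$; being a linear center of a reversible system, $e_0$ is a genuine center, surrounded by a one-parameter family of closed orbits symmetric about $\{y=0\}$. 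Second, by reversibility any non-constant orbit meeting $\{y=0\}$ at two points is periodic, and any orbit meeting $\{y=0\}$ is symmetric about it. Finally, Corollary \ref{ejefase} and Theorem \ref{teoes} supply a distinguished orbit $\gamma_0$ running from $(0,1)$ to $(0,-1)$ inside $\{x>\Gamma_1(y)\}$ and meeting $\{y=0\}$ exactly once (at some $x_s>r_0$); it is the strictly convex sphere $S_\cH$ of item 2, and it serves as the separatrix. Together with the segment $\{x=0,\,|y|\le1\}$ it bounds a topological disk $D\subset\Theta_1$ containing $e_0$.

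With this in hand I would classify every orbit by its position relative to $\gamma_0$. The orbits inside $D$ are exactly the closed orbits around the center $e_0$: each is symmetric about $\{y=0\}$, meets it at $x_{\min}<r_0<x_{\max}$, stays in $\{|y|\le\nu_0\}$ for some $\nu_0<1$, and has $x$ bounded between positive values; these are the \emph{$\ch$-unduloids} $U_\ch$. Their periodicity gives a vertical translation symmetry of the profile curve, the bound $x_{\min}>0$ makes them embedded and diffeomorphic to $\S^{n-1}\times\R$ inside a solid tube about the axis, and $|y|\le\nu_0<1$ means the angle function avoids $\pm1$, i.e. the Gauss map omits caps around the poles. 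For orbits outside $D$: using \eqref{yfuncx} one checks that no orbit can escape to $x=\8$ (for $|y|$ bounded away from $0$ and $1$, $y$ is driven back to $0$ within a bounded $x$-range), and, since such an orbit cannot cross $\gamma_0$, it cannot reach $\{x=0\}$ at an interior height either. Hence it must attain $y=\pm1$ at some $x>0$ and pass into the other sheet, winding once around the $\S^1$-factor; meeting $\{y=0\}$ twice, it is periodic. These are the \emph{$\ch$-nodoids} $N_\ch$: again vertical-translation invariant, contained in a tube about the axis and diffeomorphic to $\S^{n-1}\times\R$, but now the angle function attains $\pm1$, so the Gauss map covers all of $\S^n$; the horizontal tangents of the profile curve on both its outer and inner sides force the surface to fold back over itself, giving a proper, non-embedded immersion. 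Finally I would argue exhaustiveness: an orbit reaching $\{x=0\}$ at some $(0,y_0)$ with $y_0\in(-1,1)$ is excluded as in Theorem \ref{teoes} (asymptotic to the axis if $y_0=0$, non-removable singularity otherwise), and no orbit is asymptotic to $e_0$ because $e_0$ is a center; thus every orbit is $e_0$, an unduloid, the sphere $\gamma_0$, or a nodoid, which is precisely the announced list.

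The main obstacle is the passage from the local, qualitative picture to the global one without any conservation law: in particular, proving that the orbits around $e_0$ genuinely close up (rather than spiral) and that they form a true one-parameter family filling $D$. This is exactly where the evenness hypothesis is indispensable, through the reversibility $R$ and the resulting center theorem; the same symmetry lets me conclude periodicity of the nodoids after gluing the two sheets. A secondary technical point is the global control of the outer orbits via \eqref{yfuncx}—ruling out escape to infinity and contact with the rotation axis—so that they are forced to reach $y=\pm1$, thereby producing the nodoids and guaranteeing that the classification is complete.
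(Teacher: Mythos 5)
Your strategy coincides with the paper's: the reversibility $(x(s),y(s))\mapsto(x(-s),-y(-s))$ coming from the evenness of $\ch$, the center structure at $e_0$ obtained from the elliptic linearization plus symmetry, the separatrix orbit $\gamma_0$ of the sphere $S_\cH$, the closed orbits in the inner region giving the unduloids, and the outer orbits crossing $y=\pm 1$ giving the nodoids. Gluing $\Theta_1$ and $\Theta_{-1}$ into a single phase cylinder is only a presentational variant of the paper's gluing of the compact bi-graphs $\Sigma_1$ and $\Sigma_{-1}$ via uniqueness of the Cauchy problem, and your ODE estimate from \eqref{yfuncx} ruling out escape to $x=\infty$ replaces, equivalently, the paper's geometric non-existence of exterior graphs with mean curvature bounded below by a positive constant.

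There is, however, one genuine gap in the nodoid case. Periodicity of the outer orbit in the glued phase cylinder only gives $x(s+T)=x(s)$ and $y(s+T)=y(s)$; the profile curve itself is invariant under the vertical translation by $\Delta z=\int_0^T z'(s)\,ds=\int_{\{\ep=1\}}\sqrt{1-y^2}\,ds-\int_{\{\ep=-1\}}\sqrt{1-y^2}\,ds$, which is a difference of two positive quantities and could a priori vanish. If $\Delta z=0$, the profile curve closes up and the resulting hypersurface is a compact immersed rotational $\cH$-hypersurface diffeomorphic to $\S^{n-1}\times\S^1$ --- a fifth family not present in the statement --- rather than a properly immersed copy of $\S^{n-1}\times\R$ invariant by a nontrivial vertical translation, which is what you assert without argument. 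The paper excludes the compact case by invoking Proposition 2.8 of \cite{BGM}: since $\cH$ is rotationally symmetric, it is invariant under $n$ linearly independent geodesic reflections of $\S^n$, so any compact $\cH$-hypersurface must be diffeomorphic to $\S^n$, contradicting the $\S^{n-1}\times\S^1$ topology. This Alexandrov-type input is not obtainable from the phase-plane analysis alone, so you need to add it (or an independent proof that $\Delta z\neq 0$) for the classification list to be complete.
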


\begin{proof}
Let $(x(s),y(s))$ be any solution to \eqref{1ordersys}. Since $\ch(y)=\ch(-y)>0$, it follows that $(x(-s),-y(-s))$ is also a solution to \eqref{1ordersys}. Geometrically, this means that any orbit of the phase space $\Theta_{\ep}$, $\ep=\pm 1$, is symmetric with respect to the $y=0$ axis. The curve $\Gamma_1$ in $\Theta_1$ given by \eqref{graga} together with $y=0$ divides the phase space $\Theta_1$ into four monotonicity regions $\Lambda_1,\dots, \Lambda_4$, all of them meeting at the equilibrium $e_0$ in \eqref{equil}. The curve $\Gamma_{-1}$ in $\Theta_{-1}$ does not exist, and so $\Theta_{-1}$ has only two monotony regions: $\Lambda^+= \Theta_{-1}\cap \{y>0\}$ and $\Lambda^-=\Theta_{-1}\cap \{y<0\}$. See Figure \ref{fig:2fa}.

\begin{figure}[h]
\centering
\includegraphics[width=0.45\textwidth]{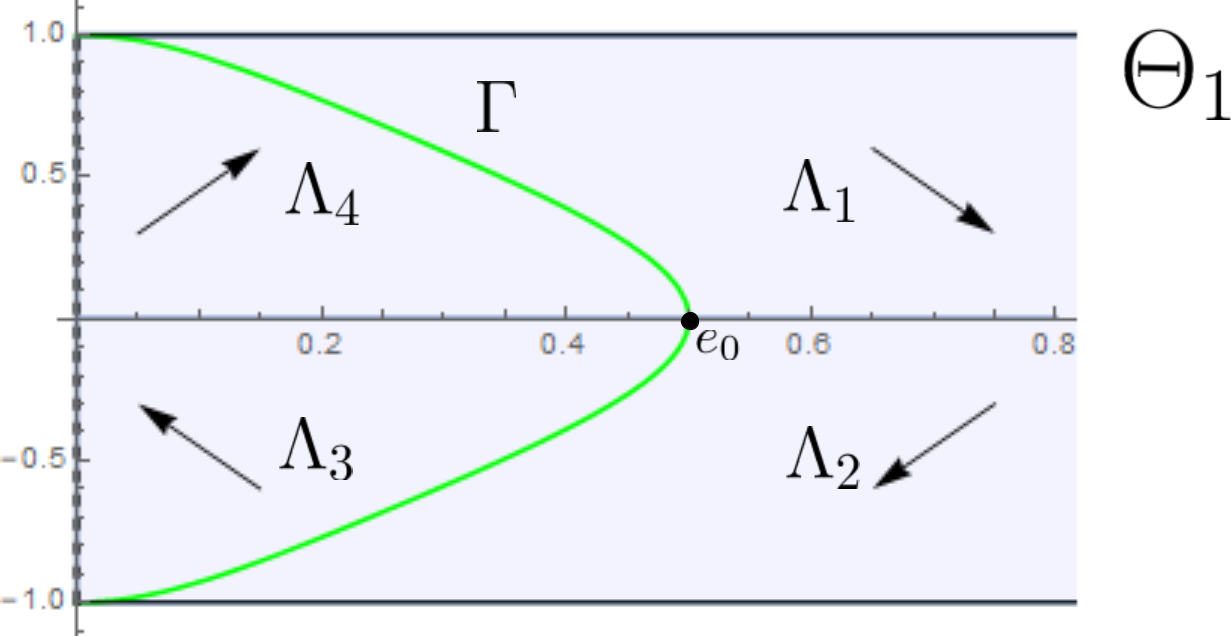}\hspace{1cm}
\includegraphics[width=0.477\textwidth]{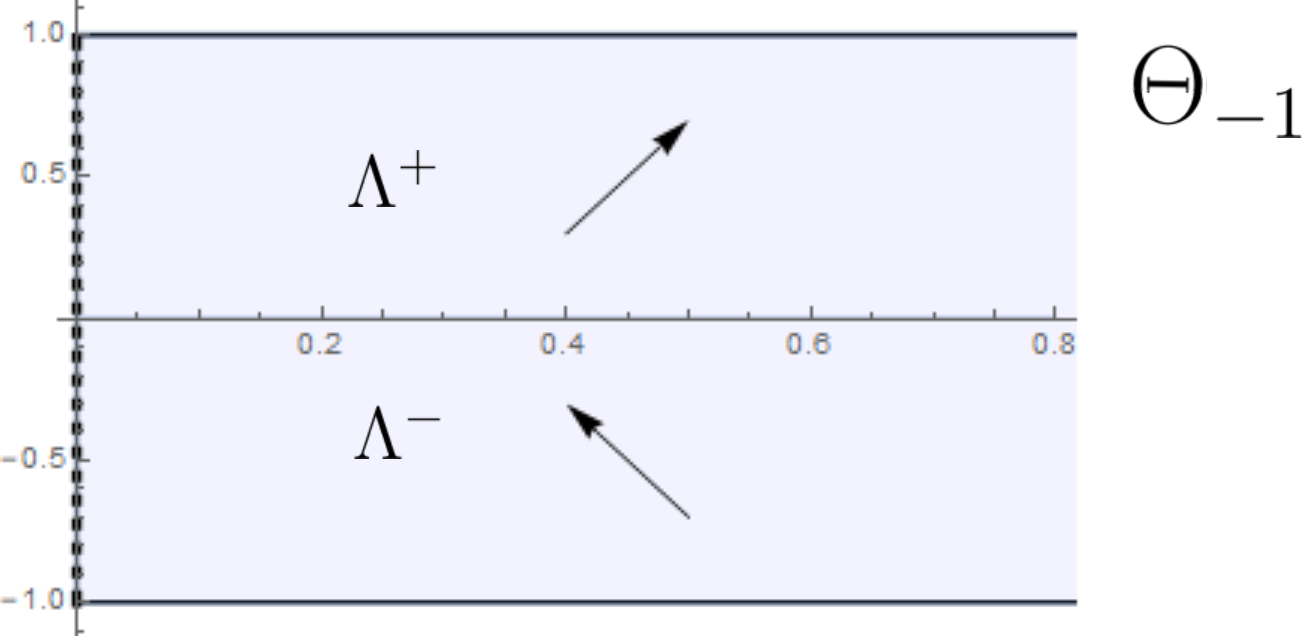}
\caption{The phase spaces $\Theta_1$ and $\Theta_{-1}$ for $\ch(y)=\ch(-y)>0$.}
\label{fig:2fa}
\end{figure}

Let us prove next that \emph{no orbit $\gamma$ in $\Theta_1$ or $\Theta_{-1}$ can have a limit point of the form $(0,y)$, $|y|<1$}. Indeed, assume that such an orbit $\gamma$ exists, and let $\alfa(s)=(x(s),z(s))$ denote the profile curve of its corresponding rotational $\cH$-hypersurface $\Sigma$. Then, $(x(s_n),x'(s_n))\to (0,y)$ for a sequence of values $s_n$, and in particular $\alfa(s)$ approaches the rotation axis in a non-orthogonal way (since $|y|\neq 1$). So, by the monotonicity properties of the phase space, we see that a piece of $\Sigma$ is a graph $x_{n+1}=u(x_1,\dots, x_n)$ on a punctured ball $\Omega-{\bf \{0\}}$ in $\R^{n+1}$. Moreover, the mean curvature function of $\Sigma$, viewed as a function $H(x_1,\dots, x_n)$ on $\Omega-{\bf \{0\}}$, extends continuously to the puncture, with value $\ch(y)$. Hence, it is known that the graph $\Sigma$ extends smoothly to the ball $\Omega$, see e.g. \cite{LRo}. In particular, the unit normal at the puncture is vertical. This is a contradiction with $|y|<1$.

The description of the orbits in $\Theta_{-1}$ follows then easily from the fact above, and the previous monotonicity properties. Any such orbit is given by a horizontal $C^1$ graph $x=g(y)$, with $g(y)=g(-y)>0$ for every $y\in (-\delta,\delta)$ for some $\delta\in (0,1]$, and such that $g$ restricted to $[0,\delta)$ is strictly increasing. In the case that $g(y)\to \8$ as $y\to \delta$ for some orbit, 
the rotational $\cH$-hypersurface in $\R^{n+1}$ described by that orbit would be a symmetric bi-graph over the exterior of an open ball in $\R^n$. This is impossible, since $\cH\geq H_0>0$ for some $H_0>0$; indeed, it is well known that there do not exist graphs in $\R^{n+1}$ over the exterior of a ball, with mean curvature bounded from below by a positive constant. 
Hence, $\delta=1$ and any orbit in $\Theta_{-1}$ has two limit endpoints of the form $(x_0,\pm 1)$ for some $x_0>0$. The resulting $\cH$-hypersurface $\Sigma_{-1}$ in $\R^{n+1}$ associated to any such orbit is then a compact (with boundary) symmetric bi-graph over some domain $\Omega\subset \R^n$ of the form $\{x \in \R^n: \alfa\leq |x|\leq x_0\}$, and its boundary is given by 
 \begin{equation}\label{bosi}
\parc \Sigma_{-1} =( \S^{n-1} (x_0) \times \{a\} )\cup (\S^{n-1} (x_0) \times \{b\}),
\end{equation}
for some $a<b$. The $z(s)$-coordinate of the profile curve $\alfa(s)$ of $\Sigma_{-1}$ is strictly decreasing, and the unit normal to $\Sigma_{-1}$ along $\parc \Sigma_{-1}\cap \{x_{n+1}=a\}$ (resp. along $\parc \Sigma_{-1}\cap \{x_{n+1}=b\}$) is constant, and equal to $e_{n+1}$ (resp. to $-e_{n+1}$). 

We describe next the orbits in $\Theta_1$ together with their associated $\cH$-hypersurfaces. First, observe that the equilibrium $e_0\in \Theta_1$, given by \eqref{equil}, corresponds to the cylinder $C_{\cH}\subset \R^{n+1}$.

\begin{figure}[h]
\centering
\includegraphics[width=0.45\textwidth,valign=c]{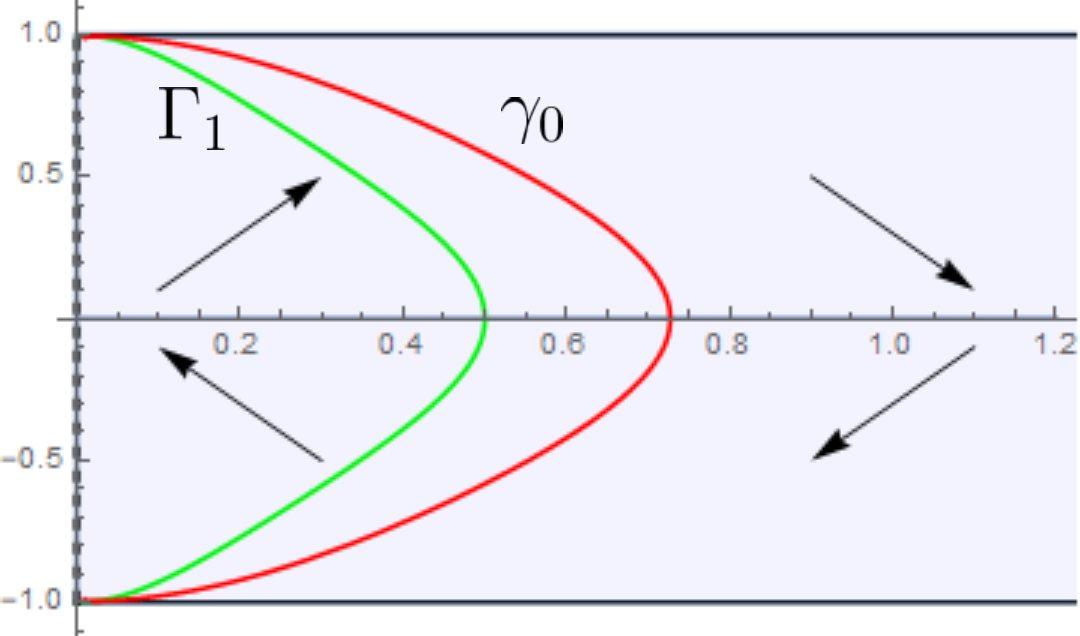}
\hspace{1cm}
\includegraphics[width=0.25\textwidth,valign=c]{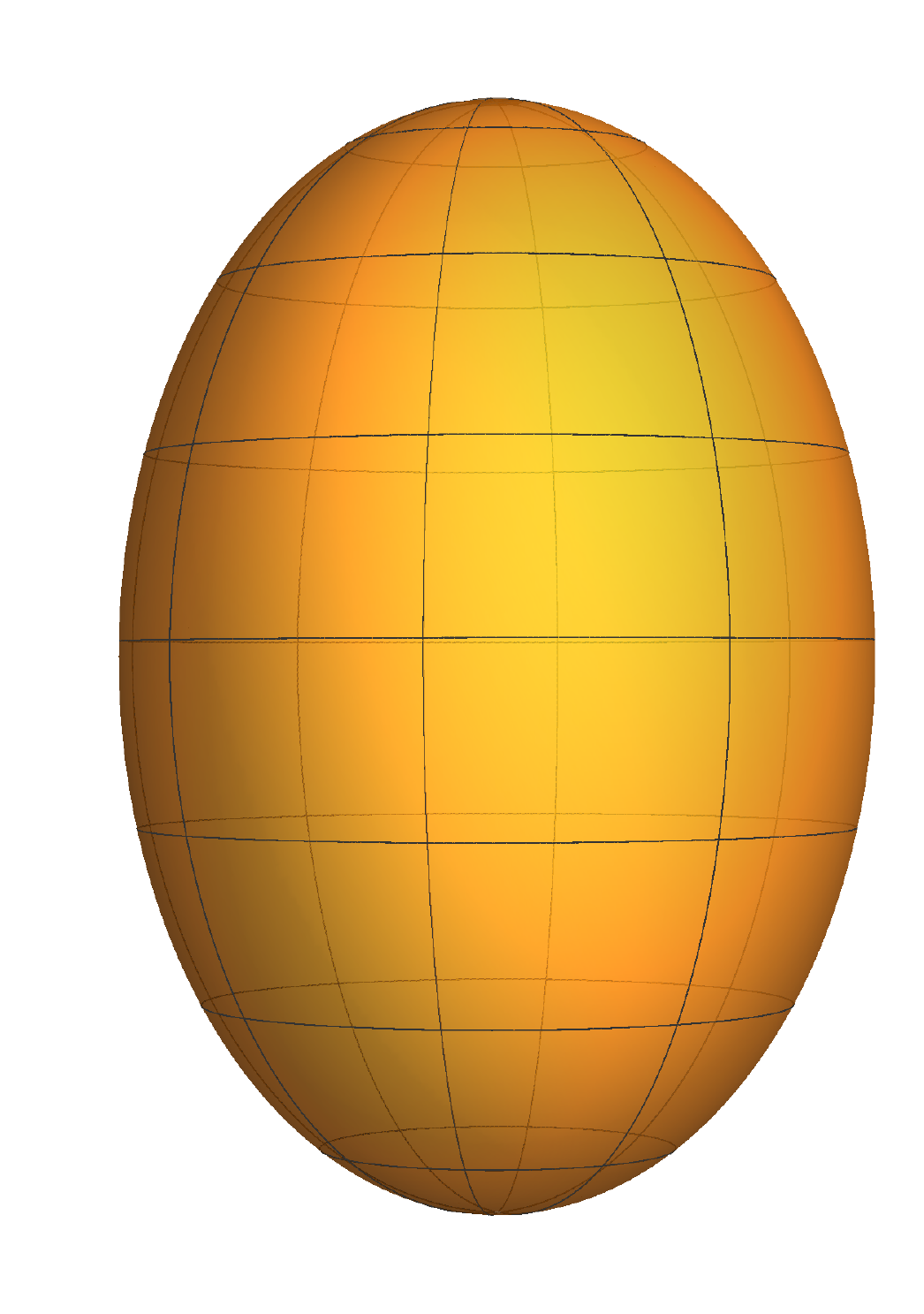}
\caption{Left: Phase space $\Theta_1$ for the choice $\ch(y)=1+y^2$ for surfaces in $\R^3$. The orbit $\gamma_0$ corresponds to the convex $\cH$-sphere $S_{\cH}$ in $\R^3$ (right).} 
\label{fig:gg}
\end{figure}

Let us analyze the structure of the orbits of $\Theta_1$ around $e_0$. Noting that $\ch'(0)=0$ by symmetry, we can check that the linearized system at $e_0$ associated to the nonlinear system \eqref{1ordersys} for $\ep=1$ is
\begin{equation}\label{2ordersys}
\left(\begin{array}{c}
u\\
v
\end{array}\right)'=\left(\begin{array}{cc}
0 & 1\\
- n^2\ch(0)^2/(n-1) & 0
\end{array}\right) \left(\begin{array}{c}
u\\
v
\end{array}\right),
\end{equation}
whose orbits are ellipses around the origin. By classical theory of nonlinear autonomous systems, this means that there are two possible configurations for the space of orbits of \eqref{1ordersys} near $e_0$; either all such orbits are closed curves (a \emph{center} structure), or they spiral around $e_0$. However, this second possibility cannot happen, since all orbits of \eqref{1ordersys} are symmetric with respect to the axis $y=0$, and $e_0$ lies in this axis. In particular, we deduce that \emph{all orbits of $\Theta_1$ stay at a positive distance from the equilibrium $e_0$.
}

Next, observe that by Corollary \ref{ejefase} there exists an orbit $\gamma_0$ in the phase space $\Theta_1$ that has $(0,1)$ as a limit point. Note that $\gamma_0$ lies near $(0,1)$ inside the monotonicity region $\Lambda_1$, and it cannot stay forever in that region, by an argument as the one above using that $\cH\geq H_0>0$. Moreover, $\gamma_0$ stays at a positive distance from $e_0$, by the previous discussion, and it does not intersect $\Gamma_1$. Thus, $\gamma_0$ must intersect the $y=0$ axis at some point $(x_0,0)\neq e_0$, with $x_0>0$. By symmetry of the phase space, we deduce that $\gamma_0$ can be extended to be an orbit in $\Theta_1$ that joins the limit points $(0,1)$ and $(0,-1)$, and that lies in the region $\Lambda_1\cup \Lambda_2\cup \{y=0\}$; see Figure \ref{fig:gg}. It is clear that the rotational $\cH$-hypersurface associated to the orbit $\gamma_0$ is a strictly convex $\cH$-sphere $S_{\cH}$ (see Theorem \ref{teoes}).

Also, observe that $\gamma_0$ divides $\Theta_1$ into two connected components: the one containing the equilibrium $e_0$, which we will denote by $\cW_0$, and the one where $x>0$ is unbounded, which will be denoted  by $\cW_{\8}$. Any orbit of $\Theta_1$ other than $\gamma_0$ lies entirely in one of these two open sets. 

By symmetry and monotonicity, and the fact that $\gamma_0$ is the unique orbit in $\Theta_{1}$ with an endpoint of the form $(0,\pm 1)$, it is clear that any orbit in $\cW_{\8}$ is a symmetric horizontal graph $x=g(y)$, with $g(1)=g(-1)=x_0$ for some $x_0>0$, and such that $g$ is strictly increasing (resp. decreasing) in $(-1,0]$ (resp. in $[0,1)$). Let $\Sigma_1$ denote the rotational $\cH$-hypersurface in $\R^{n+1}$ associated to any such orbit in $\cW_{\8}$, and let $\alfa(s)=(x(s),z(s))$ be its profile curve. Note that $z'>0$ since $\ep=1$. By similar arguments to the ones used for $\Theta_{-1}$, we conclude that $\Sigma_1$ is a compact (with boundary) symmetric bi-graph in $\R^{n+1}$ over some domain in $\R^n$ of the form $\{x \in \R^n: x_0\leq |x|\leq \beta \}$, and  
 \begin{equation}\label{bosi}
\parc \Sigma_1 =( \S^{n-1} (x_0) \times \{c\} )\cup (\S^{n-1} (x_0) \times \{d\}),
\end{equation}
for some $c<d$. The unit normal to $\Sigma_1$ along $\parc \Sigma_1\cap \{x_{n+1}=c\}$ (resp. along $\parc \Sigma_1\cap \{x_{n+1}=d\}$) is $e_{n+1}$ (resp. $-e_{n+1}$). 
\begin{figure}[h]
\centering
\includegraphics[width=0.5\textwidth,valign=c]{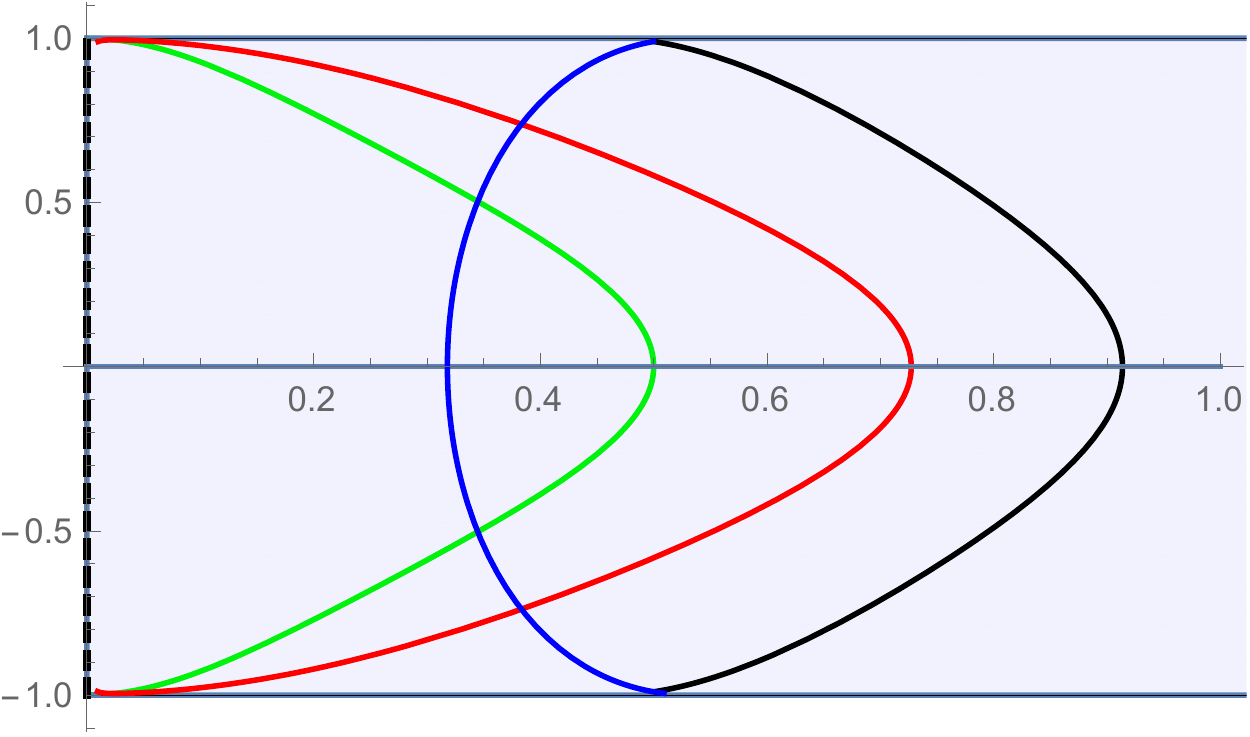}
\hspace{1cm}
\includegraphics[width=0.35\textwidth,valign=c]{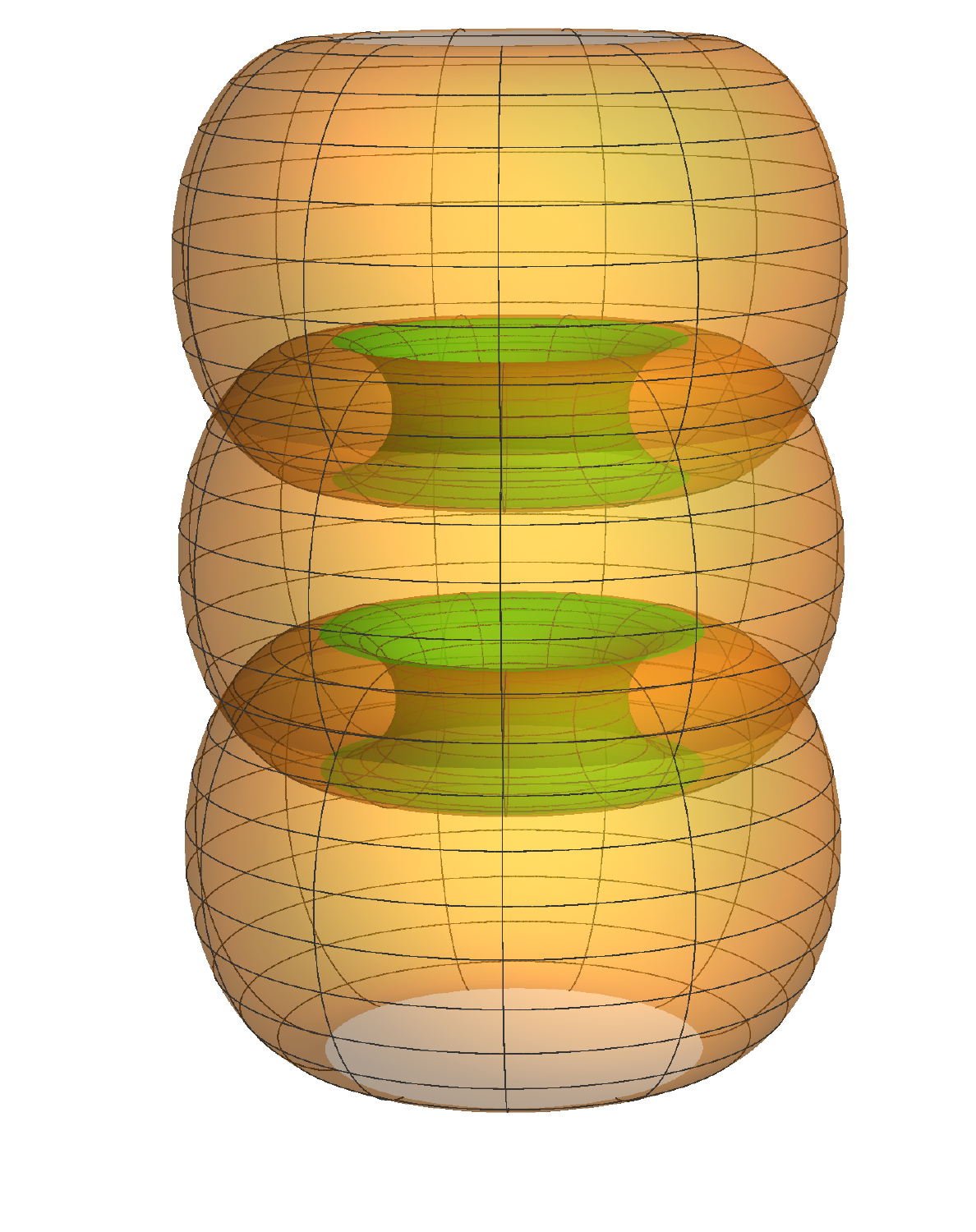}
\caption{Left: Phase space for the choice $\ch(y)=1+y^2$ for surfaces in $\R^3$. The green curve corresponds to $\Gamma_1$, and the red curve to the orbit $\gamma_0$ of the $\cH$-sphere $S_{\cH}$. The black curve is the orbit corresponding to the piece of the $\ch$-nodoid where $z'>0$, and thus lies in $\Theta_1$. The blue curve is the orbit of the $\ch$-nodoid in the $\Theta_{-1}$ phase space (it can intersect other orbits, since it belongs to a different phase space). Right: a picture of a section of the $\ch$-nodoid in $\R^3$.}
\label{fig:nod}
\end{figure}

\begin{figure}[h]
\centering
\includegraphics[width=0.5\textwidth,valign=c]{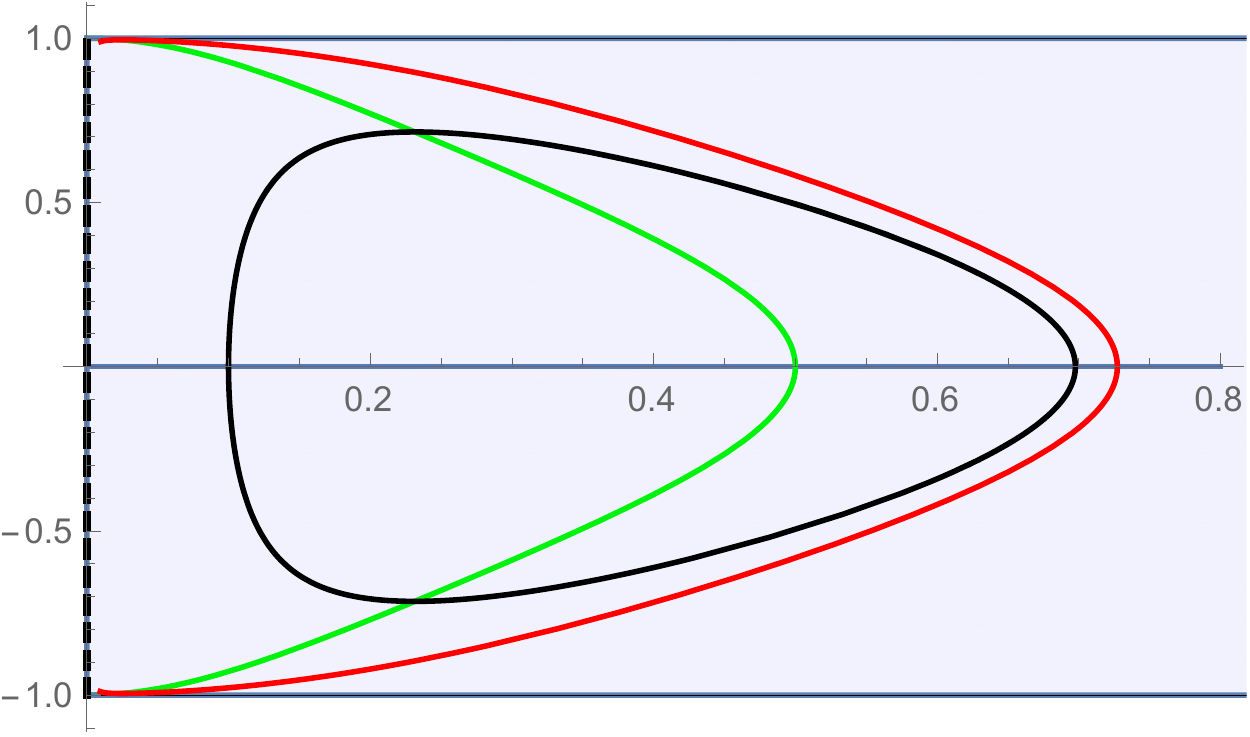}
\hspace{2cm}
\includegraphics[width=0.13\textwidth,valign=c]{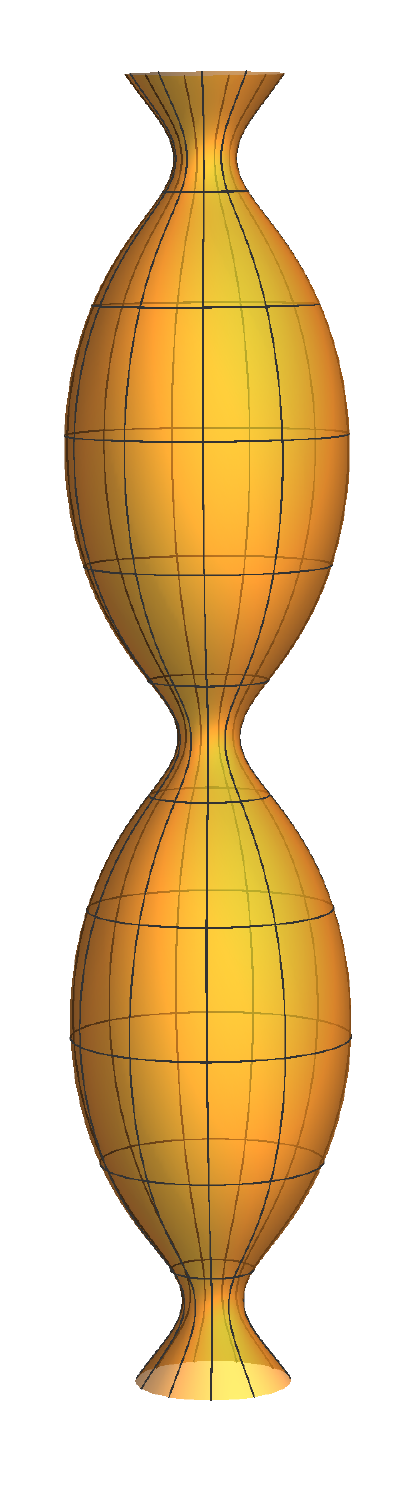}
\caption{Left: Phase space $\Theta_1$ for the choice $\ch(y)=1+y^2$ for surfaces in $\R^3$. The green curve corresponds to $\Gamma_1$, and the red curve to the orbit $\gamma_0$ of the $\cH$-sphere $S_{\cH}$. The black curve is the orbit of an $\ch$-unduloid. Right: a picture of an $\ch$-unduloid in $\R^3$.}
\label{fig:undu}
\end{figure}

Consequently, by uniqueness of the solution to the Cauchy problem for rotational $\cH$-hypersurfaces in $\R^{n+1}$, we can deduce that, given $x_0>0$, the $\cH$-hypersurfaces $\Sigma_{-1}$ and $\Sigma_1$ that we have constructed associated to $x_0$ can be smoothly glued together along any of their boundary components where their unit normals coincide, to form a larger $\cH$-hypersurface. For this, we should note that both $\Sigma_{-1}$ and $\Sigma_1$ are defined up to vertical translations in $\R^{n+1}$, and so we can assume without loss of generality in the previous construction that $a=d$ or that $b=c$ (and hence $\Sigma_1$ and $\Sigma_{-1}$ have the same Cauchy data). There are two cases to consider.

If we have simultaneously $a=d$ and $b=c$, the hypersurface obtained by gluing $\Sigma_1$ with $\Sigma_{-1}$ is a compact rotational $\cH$-hypersurface diffeomorphic to $\S^{n-1}\times \S^1$. But now, note that since $\cH$ is rotationally symmetric, in particular, it is invariant with respect to $n$ linearly independent geodesic reflections of $\S^n$. In these conditions, Proposition 2.8 in \cite{BGM} establishes that the hypersurface must be diffeomophic to $\S^n$, and this contradicts the previous information on its topology.

Consequently, we have $a=d$ and $b\neq c$, or $b=c$ and $a\neq d$. In that way, by iterating the previous process we obtain a proper, non-embedded rotational $\cH$-hypersurface in $\R^{n+1}$ diffeomorphic to $\S^{n-1}\times \R$, invariant by a vertical translation. This proves the existence of the family of $\ch$-nodoids $N_{\ch}$ in $\R^{n+1}$.

To end the proof of Theorem \ref{dela}, we consider an orbit $\gamma$ of $\Theta_1$ that is contained in the region $\cW_0$. Recall that we proved previously that $\gamma$ stays at a positive distance from the equilibrium $e_0$. Also, recall that we showed at the beginning of the proof that $\gamma$ cannot approach a point of the form $(0,y)$ with $y\in (-1,1)$.  So, as $\gamma$ is symmetric with respect to the $y=0$ axis, if we  take into account the monotonocity properties of $\Theta_1$, we deduce that $\gamma$ is a closed curve containing $e_0$ in its inner region. This implies that the profile curve $\alfa(s)=(x(s),z(s))$ of the rotational $\cH$-hypersurface associated to any such orbit satisfies that $s$ is defined for all real values, that $z'(s)>0$ for all $s$, and that $x(s)$ is periodic. These properties imply that $\Sigma$ is an $\ch$-unduloid, with all the properties listed in the statement of the theorem (see Figure \ref{fig:undu}). This concludes the proof.

%
%
%

%
\end{proof}

It is interesting to remark that the family of $\ch$-unduloids for a given $\ch$ in the conditions of Theorem \ref{dela} is a continuous $1$-parameter family. Similarly to what happens in the CMC case, at one extreme of the parameter domain we have a (singular) vertical chain of tangent rotational $\cH$-spheres $S_{\cH}$, and at the other extreme we have the $\cH$-cylinder $C_{\cH}$.

\begin{figure}[h]
\begin{center}
\includegraphics[width=.25\textwidth,valign=c]{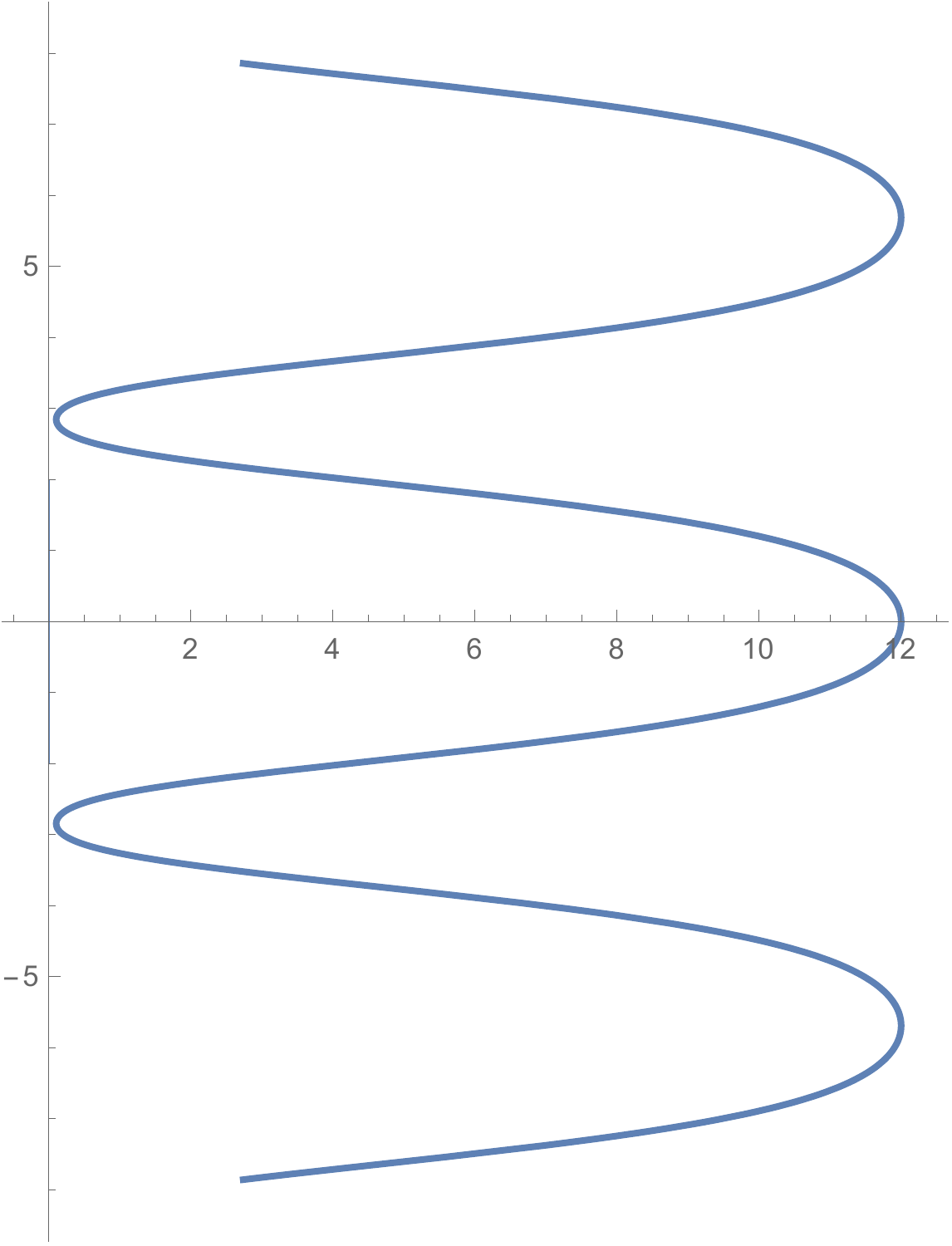}\quad
\hspace{2cm}
\includegraphics[width=.5\textwidth,valign=c]{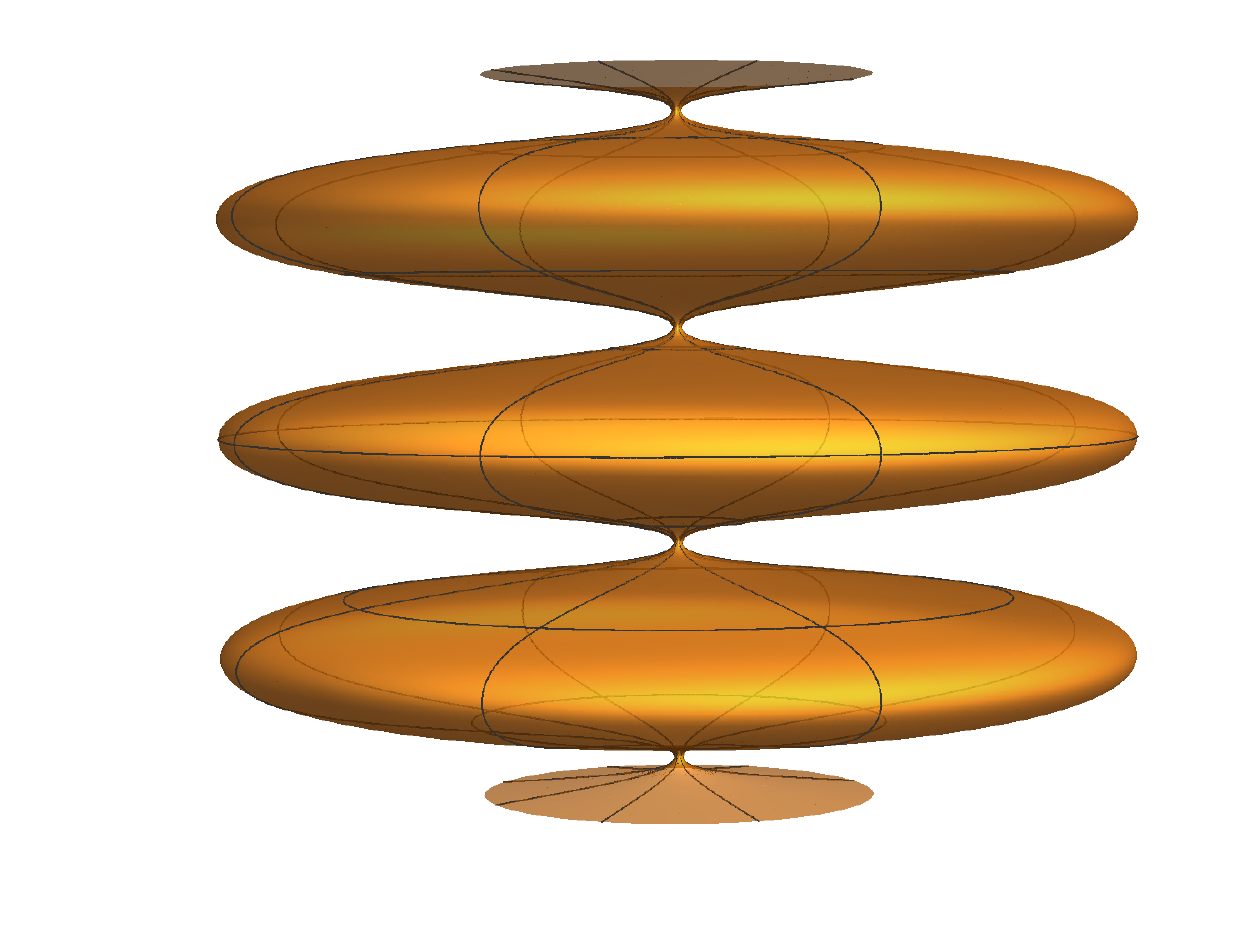}\quad 
\caption{Profile curve and picture of an $\ch$-unduloid in $\R^3$ for $\ch(y)=1-y^2$.}
\label{fig:unh}
\end{center}
\end{figure}

It is also important to stress that the ideas in the proof of Theorem \ref{dela} can be used to classify the rotational $\cH$-hypersurfaces in $\R^{n+1}$ for other types of rotationally symmetric choices of $\cH$.

For instance, assume that $\cH\in C^1(\S^n)$ is rotationally symmetric, and its associated function $\ch\in C^1([-1,1])$ satisfies that $\ch(y)=\ch(-y)>0$ for every $y\in (-1,1)$, and $\ch(\pm 1)=0$. In that case it can be shown using the previous ideas (although details will be skipped here) that there are exactly four types of rotational $\cH$-hypersurfaces in $\R^{n+1}$ with axis $x_{n+1}$: \emph{horizontal hyperplanes, cylinders $C_{\cH}$ of the form $\S^n(r_0)\times \R$, $\ch$-catenoids of non-vanishing mean curvature like the ones of Proposition \ref{hcats} (with opposite orientation), and hypersurfaces of $\ch$-unduloid type} similar to those of Theorem \ref{dela}. Interestingly, in this situation the family of $\ch$-unduloids varies between the cylinder $C_{\cH}$ and a singular family of double covers of horizontal hyperplanes joined along the $x_{n+1}$-axis. See Figure \ref{fig:unh}.

We give next an application of Theorem \ref{dela}. Let us recall that constant mean curvature spheres in $\R^3$ are described by a classical theorem of H. Hopf: \emph{any surface of constant mean curvature surface $H$ immersed in $\R^3$ and diffeomorphic to $\S^2$ is a sphere of radius $1/H$}. In \cite{GM1,GM2,GM3} the last two authors proved several extensions of Hopf's theorem to very general geometric situations. In the context of $\cH$-surfaces in $\R^3$, their result can be phrased as follows:

\begin{teo}[\cite{GM1,GM2}]\label{hopf}
Let $\cH\in C^0(\S^2)$, and assume that there exists a strictly convex $\cH$-sphere $S$ in $\R^3$. Then any $\cH$-surface immersed in $\R^3$ diffeomorphic to $\S^2$ is a translation of $S$.
\end{teo}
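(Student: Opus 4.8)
The plan is to run a Hopf-type argument adapted to the prescribed-curvature setting, using the strictly convex sphere $S$ as a geometric reference. Since $S$ is strictly convex, its Gauss map $\eta_S:S\flecha\S^2$ is a diffeomorphism, so the shape operator of $S$ can be read as a field of symmetric endomorphisms depending only on the normal direction $\nu\in\S^2$; call it $\mathcal{S}_0(\nu)$, and note that its trace equals $2\cH(\nu)$. Given any immersed $\cH$-sphere $\Sigma$ with Gauss map $\eta:\Sigma\flecha\S^2$, I would transport $\mathcal{S}_0$ to $\Sigma$ via the identification $T_p\Sigma=\eta(p)^\perp=T_{\eta_S^{-1}(\eta(p))}S$, and consider the symmetric endomorphism $A:=\mathcal{S}_\Sigma-\mathcal{S}_0(\eta)$ measuring the second-order deviation of $\Sigma$ from the model. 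Because both $\Sigma$ and $S$ have mean curvature $\cH$ at points with the same normal, $A$ is trace-free, and the whole problem reduces to showing $A\equiv 0$.

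First I would fix the conformal structure of the induced metric on $\Sigma$ (a Riemann sphere, $\Sigma\cong\S^2$), write it as $\lambda|dz|^2$, and encode $A$ in the complex quadratic differential $\phi\,dz^2$ given by the $(2,0)$-part of $A$. I would then compute $\phi_{\bar z}$ using the Codazzi equations for $\Sigma$ together with the chain rule $H_z=d\cH_{\eta}(\eta_z)$ and the Weingarten relation $\eta_z=-H\psi_z-\tfrac{2}{\lambda}Q\,\psi_{\bar z}$, where $Q$ denotes the classical Hopf differential. The reason for subtracting the reference term coming from $S$ is that the inhomogeneous contribution produced by the non-constant $\cH$ is identical for $\Sigma$ and for $S$ (both carry the same prescribed data), so it cancels in the difference. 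The expected outcome is a homogeneous Bers--Vekua equation $\phi_{\bar z}=a\,\phi+b\,\bar\phi$ with bounded coefficients $a,b$.

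Next I would apply the similarity principle of Bers and Vekua: either $\phi\equiv 0$, or $\phi$ has only isolated zeros, at each of which $\phi=e^{\omega}g$ for a local holomorphic $g$ and a continuous $\omega$; hence the line field determined by $\phi\,dz^2$ (the principal-like directions of $A$) has a singularity of negative index at each zero. Since $\Sigma\cong\S^2$, the Poincar\'e--Hopf theorem forces the total index of this line field to equal $\chi(\S^2)=2>0$, which is incompatible with having only negative-index singularities. Therefore $\phi\equiv 0$. As $A$ is symmetric and trace-free, $\phi\equiv 0$ gives $A\equiv 0$, so $\Sigma$ and $S$ share the same shape operator at points with the same normal direction; a Bonnet-type rigidity argument (equal first and second fundamental forms, and equal Gauss map) then shows that $\Sigma$ is a translate of $S$.

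The hard part will be the first step: producing the correct reference quadratic differential from $S$ and verifying that the $\cH$-dependent inhomogeneous terms cancel \emph{exactly}, so that $\phi$ genuinely satisfies a homogeneous pseudoholomorphic equation rather than merely an inequality. A second, more technical difficulty is the low regularity: with $\cH$ only continuous, the immersion is a priori just $C^{1,\alpha}$, so the Hopf differential, the Codazzi equations and the similarity principle must all be formulated in a weak ($L^p$) framework, and one must check that the index-counting argument survives this loss of smoothness.
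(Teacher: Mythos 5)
First, a point of reference: the paper does not prove this statement. Theorem \ref{hopf} is imported verbatim from \cite{GM1,GM2}, so your attempt has to be measured against the argument in those references rather than against anything in this text. Your overall strategy (a trace-free comparison tensor built from $S$, its $(2,0)$-part as a quadratic differential, negative-index singularities, Poincar\'e--Hopf on $\S^2$) is the right family of ideas and is close in spirit to the classical Hopf argument. But the proposal has a genuine gap at exactly the step you yourself flag as ``the hard part'': you never establish that $\phi$, the $(2,0)$-part of $A=\mathcal{S}_\Sigma-\mathcal{S}_0(\eta)$, satisfies a \emph{homogeneous} equation $\phi_{\bar z}=a\,\phi+b\,\bar\phi$; you only state it as the ``expected outcome''. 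Worse, at the stated regularity the equation cannot even be written down: computing $\phi_{\bar z}$ forces you to differentiate $\mathcal{S}_0(\eta)$ along $\Sigma$, which by the chain rule brings in the derivative of the shape operator of $S$ with respect to the normal parameter (third-order data of $S$) and, through Codazzi, the derivative of $\cH$. With $\cH\in C^0(\S^2)$ neither object need exist, even in the $L^p_{\rm loc}$, $p>2$, class required by the Bers--Vekua similarity principle. Reformulating ``everything weakly'' does not repair this, because the missing regularity sits in the coefficients $a,b$, not in $\phi$.

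This is precisely why the proof in \cite{GM1,GM2} is organized differently. Instead of one global quadratic differential, it exploits the hypothesis that the strictly convex sphere $S$ exists as a \emph{family of candidates} (its translates): at each $p\in\Sigma$ one places the translate of $S$ tangent to $\Sigma$ at $p$ with matching unit normal, writes both surfaces as graphs over the common tangent plane, and studies the difference $u$ of the two graphing functions, which vanishes together with its gradient at $p$. Unless $\Sigma$ locally coincides with that candidate, $u$ vanishes to finite order $k\geq 2$ and its zero set near $p$ consists of $k$ arcs crossing at $p$; the resulting field of contact directions is a line field on $\Sigma$ whose isolated singularities all have negative index, contradicting $\chi(\S^2)=2$ unless $\Sigma$ is a translate of $S$. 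This pointwise comparison never differentiates $\cH$ or the candidate family in the normal parameter, which is what makes the $C^0$ hypothesis workable. Your closing Bonnet-type step is fine once $A\equiv 0$ is known; the unproved --- and, as stated at $C^0$ regularity, unprovable --- step is the homogeneous equation for $\phi$.
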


So, as a direct application of Theorems \ref{dela} and \ref{hopf}, and since there are no compact $\cH$-hypersurfaces in $\R^{n+1}$ if $\cH\in C^1(\S^n)$ vanishes somewhere (see \cite[Proposition 2.6]{BGM}), we have:

\begin{cor}
Let $\cH\in C^1(\S^2)$ be \emph{even} (i.e. $\cH(x)=\cH(-x)$ $\forall x\in \S^2$) and rotationally symmetric, and let $\Sigma$ be an immersed $\cH$-surface in $\R^3$ diffeomorphic to $\S^2$. Then, $\Sigma$ is the rotational, strictly convex $\cH$-sphere $S_{\cH}$ given by Theorem \ref{dela}.
\end{cor}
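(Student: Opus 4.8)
The plan is to deduce the corollary by combining the two main results already available: the Delaunay-type classification of Theorem \ref{dela}, which manufactures a strictly convex rotational $\cH$-sphere, and the Hopf-type uniqueness of Theorem \ref{hopf}, which then forces \emph{every} immersed topological sphere of prescribed mean curvature $\cH$ to coincide with it up to translation. No new computation should be needed; the only point requiring genuine care is a reduction of the orientation, and this is precisely where the evenness hypothesis on $\cH$ enters.

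First I would record that $\Sigma$, being diffeomorphic to $\S^2$, is compact. By \cite[Proposition 2.6]{BGM}, the mere existence of a compact $\cH$-surface forces $\cH$ to be nowhere zero on $\S^2$; since $\cH\in C^1(\S^2)$ is continuous and $\S^2$ is connected, $\cH$ therefore has constant sign. Next I would reduce to the case $\cH>0$. If instead $\cH<0$, I reverse the orientation of $\Sigma$, which replaces the Gauss map $\eta$ by $-\eta$ and the mean curvature $H_\Sigma$ by $-H_\Sigma$. Writing $\hat\eta=-\eta$ and $\hat H=-H_\Sigma$, the relation $H_\Sigma=\cH\circ\eta$ becomes $\hat H=-\cH(-\hat\eta)=-\cH(\hat\eta)$, where the last equality uses that $\cH$ is \emph{even}. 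Thus $\Sigma$ with reversed orientation is a $(-\cH)$-surface, and $-\cH$ is again even, rotationally symmetric and now positive. This is the step in which evenness is used essentially: without it, reversing orientation would produce a different prescribed function, and the reduction to a positive prescribed curvature covered by Theorem \ref{dela} could fail.

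With this reduction in place, I would translate the hypotheses into the language of the profile function: writing $\cH(x)=\ch(\langle x,e_{n+1}\rangle)$, rotational symmetry together with evenness gives $\ch(-y)=\ch(y)$, and positivity gives $\ch>0$ on $[-1,1]$. Hence $\ch$ satisfies exactly the hypothesis $\ch(y)=\ch(-y)>0$ of Theorem \ref{dela}, which in particular guarantees the existence of a strictly convex rotational $\cH$-sphere $S_{\cH}\subset\R^3$ (item 2 of that theorem).

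Finally, Theorem \ref{hopf} is stated for $\cH\in C^0(\S^2)$ and so applies a fortiori in our $C^1$ setting: the existence of the strictly convex $\cH$-sphere $S_{\cH}$ implies that any $\cH$-surface immersed in $\R^3$ and diffeomorphic to $\S^2$ is a translation of $S_{\cH}$. Applying this to our $\Sigma$ gives the conclusion. I do not expect a serious obstacle here, since all the hard analytic and geometric content is contained in the two quoted theorems; the delicate point to present carefully is simply the orientation bookkeeping above, ensuring that the even, rotationally symmetric hypothesis is preserved so that Theorem \ref{dela} genuinely applies after the reduction to $\cH>0$.
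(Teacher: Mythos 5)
Your proposal is correct and follows exactly the route the paper intends: the corollary is stated there as a direct consequence of Theorem \ref{dela}, Theorem \ref{hopf}, and Proposition 2.6 of \cite{BGM}, with no further argument given. Your explicit orientation reduction (using evenness to turn the case $\cH<0$ into $-\cH>0$) is a correct and welcome filling-in of the one detail the paper leaves implicit.
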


\section{Examples with no CMC counterpart}\label{sec:nocmc}

The results in Sections \ref{sec:bowls} and \ref{sec:rot2} show, in particular, the existence of the following types of rotational $\cH$-hypersurfaces in $\R^{n+1}$ for rather general choices of the (rotationally symmetric) prescribed mean curvature function $\cH\in C^1(\S^n)$: hyperplanes, cylinders, bowls, catenoids, convex spheres, unduloids and nodoids. This might suggest that $\cH$-hypersurfaces behave roughly as CMC hypersurfaces. In this section we shall see, however, that there are many other possible types of rotational $\cH$-hypersurfaces in $\R^{n+1}$. For definiteness, we will restrict our discussion to the case $n=2$.

So, in what follows, we will consider some specific functions $\ch\in C^{\8}([-1,1])$, which define rotationally symmetric functions $\cH\in C^{\8}(\S^2)$ by means of \eqref{presim0}, and we will construct rotational $\cH$-surfaces $\Sigma$ in $\R^3$ associated to them.

\begin{figure}[h]
\begin{center}
\includegraphics[width=.09\textwidth,valign=c]{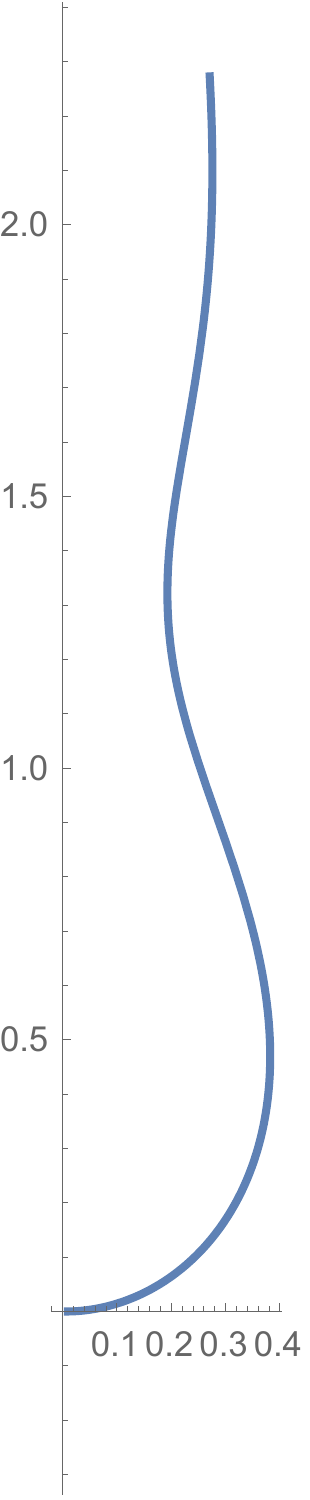}\quad
\hspace{2cm}
\includegraphics[width=.37\textwidth,valign=c]{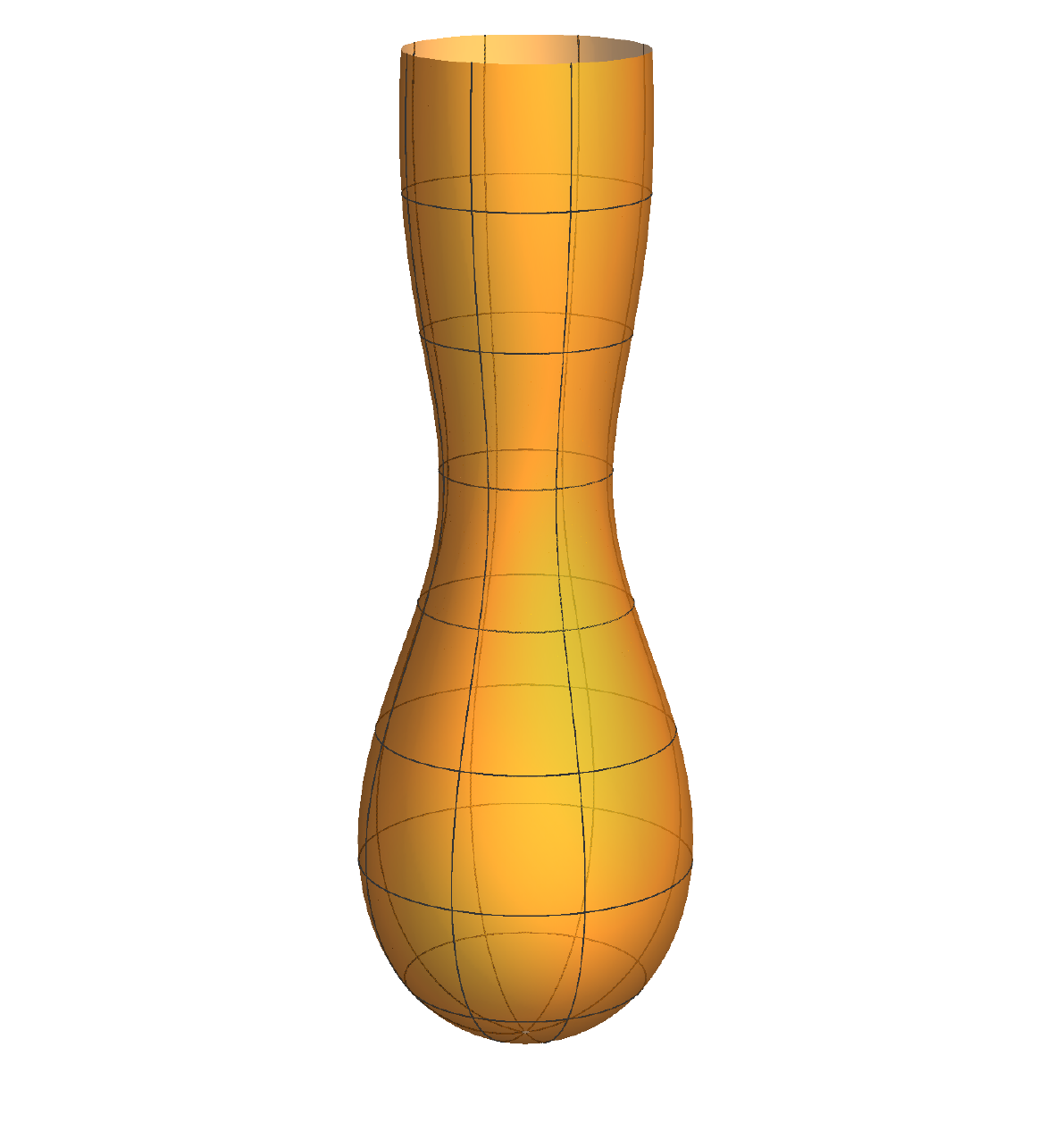}\quad 
\end{center}
\caption{Profile curve and picture for $\ch(y)=y+2$ of the rotational $\cH$-surface in $\R^3$ that meets its rotation axis with unit normal $e_3$.}
\label{fig:noes1}
\end{figure}

We first consider the case that $\Sigma$ touches its rotation axis orthogonally. If $\ch$ is an even positive function on $[-1,1]$, then $\Sigma$ is a strictly convex sphere (see Theorem \ref{dela}). However, if $\ch>0$ is not even, the generic situation is that one of the orbits that start at $(0,\pm 1)$ in the phase space $\Theta_1$ ends up spiraling around the equilibrium, while the other one ends up leaving $\Theta_1$ in a finite time of its parameter across one of the boundary curves $y=\pm 1$ (and thus, it enters the \emph{other} phase space $\Theta_{-1}$). In the first case, we obtain properly embedded rotational $\cH$-disks in $\R^3$ that converge asymptotically to the vertical cylinder of mean curvature $\ch(0)$, wiggling around it. See Figure \ref{fig:noes1}. In the second case, we typically obtain properly immersed, non-embedded, rotational $\cH$-disks that do not remain at a bounded distance from their rotation axis. See Figure \ref{fig:noes2}.

\begin{figure}[h]
\begin{center}
\includegraphics[width=.35\textwidth,valign=c]{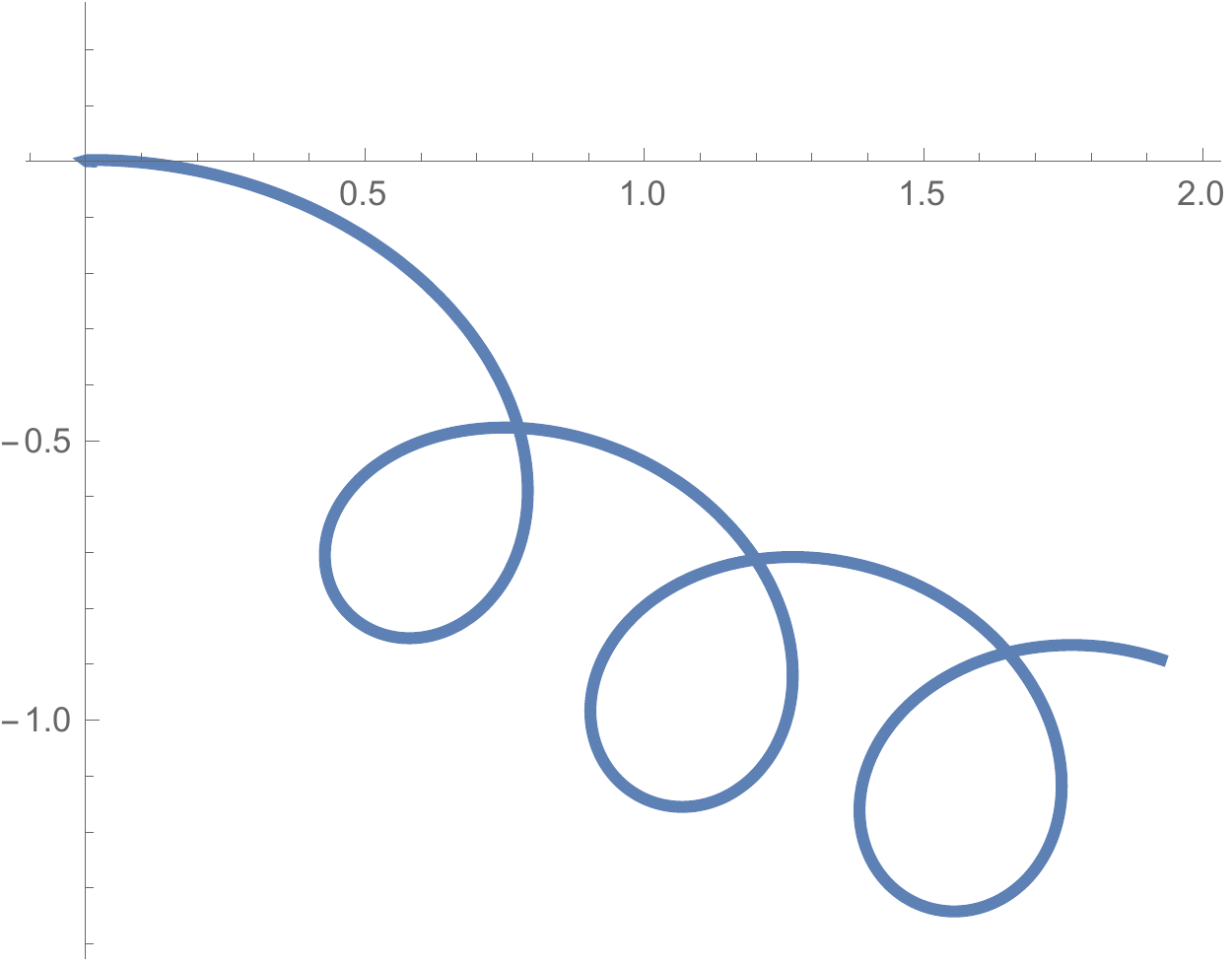}\quad 
\includegraphics[width=.6\textwidth,valign=c]{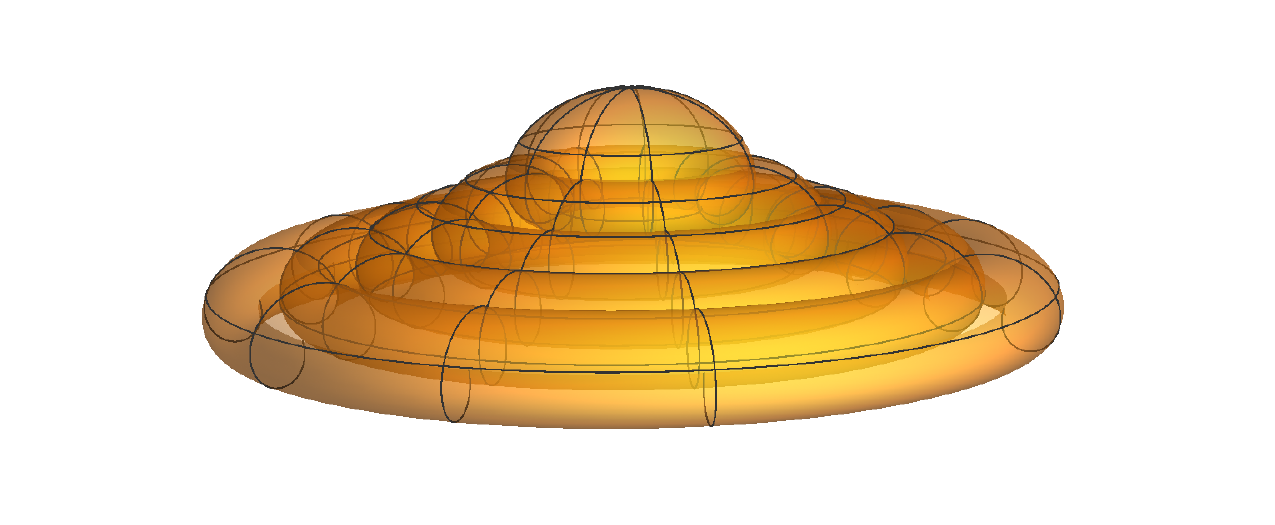}\quad 
\end{center}
\caption{Profile curve and picture for $\ch(y)=y+2$ of the rotational $\cH$-surface in $\R^3$ that meets its rotation axis with unit normal $-e_3$.}
\label{fig:noes2}
\end{figure}

Another less common, but still possible situation, is that the orbit in $\Theta_1$ that starts at $(0,\pm 1)$ converges to the equilibrium $e_0\in \Theta_1$ without spiraling around it. This gives examples of complete, non-entire, strictly convex rotational $\cH$-graphs that converge asymptotically to a cylinder. For instance, consider the rotational surface $\Sigma$ in $\R^3$ around the $x_3$-axis, with profile curve $\gamma(s)=(s,0,-\log(\cos s))$, with $s\in[0,\pi/2)$. Note that $\gamma(s)$ is the generating curve of the \emph{Grim reaper cylinder} in the theory of self translating solitons of the mean curvature flow. 

\begin{figure}[h]
\begin{center}
\includegraphics[width=.11\textwidth,valign=c]{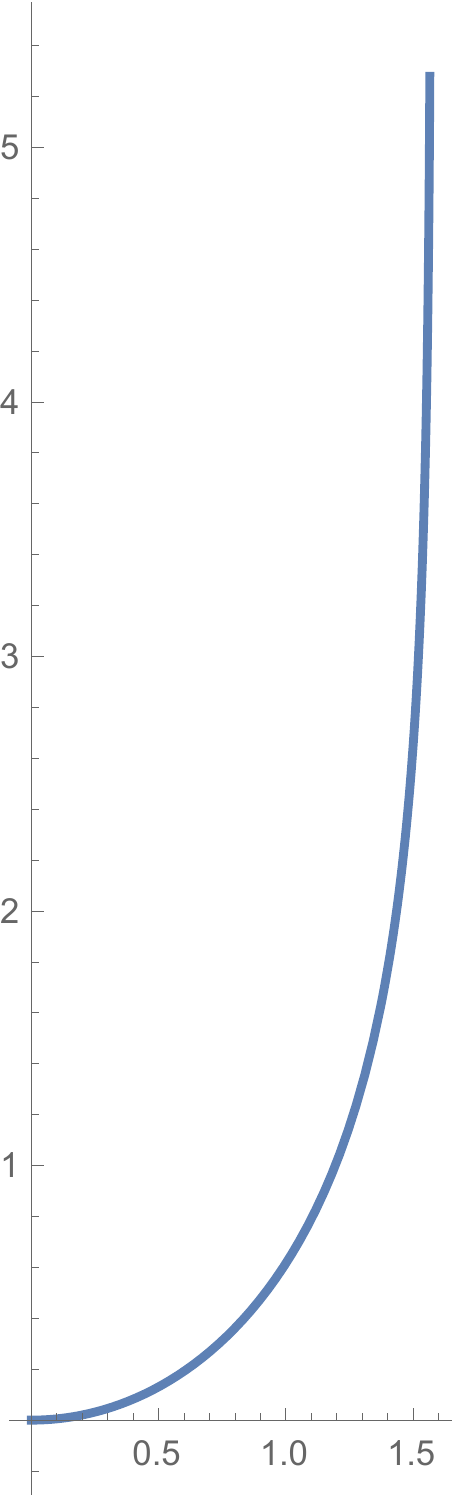}\quad
\hspace{3cm} 
\includegraphics[width=.27\textwidth,valign=c]{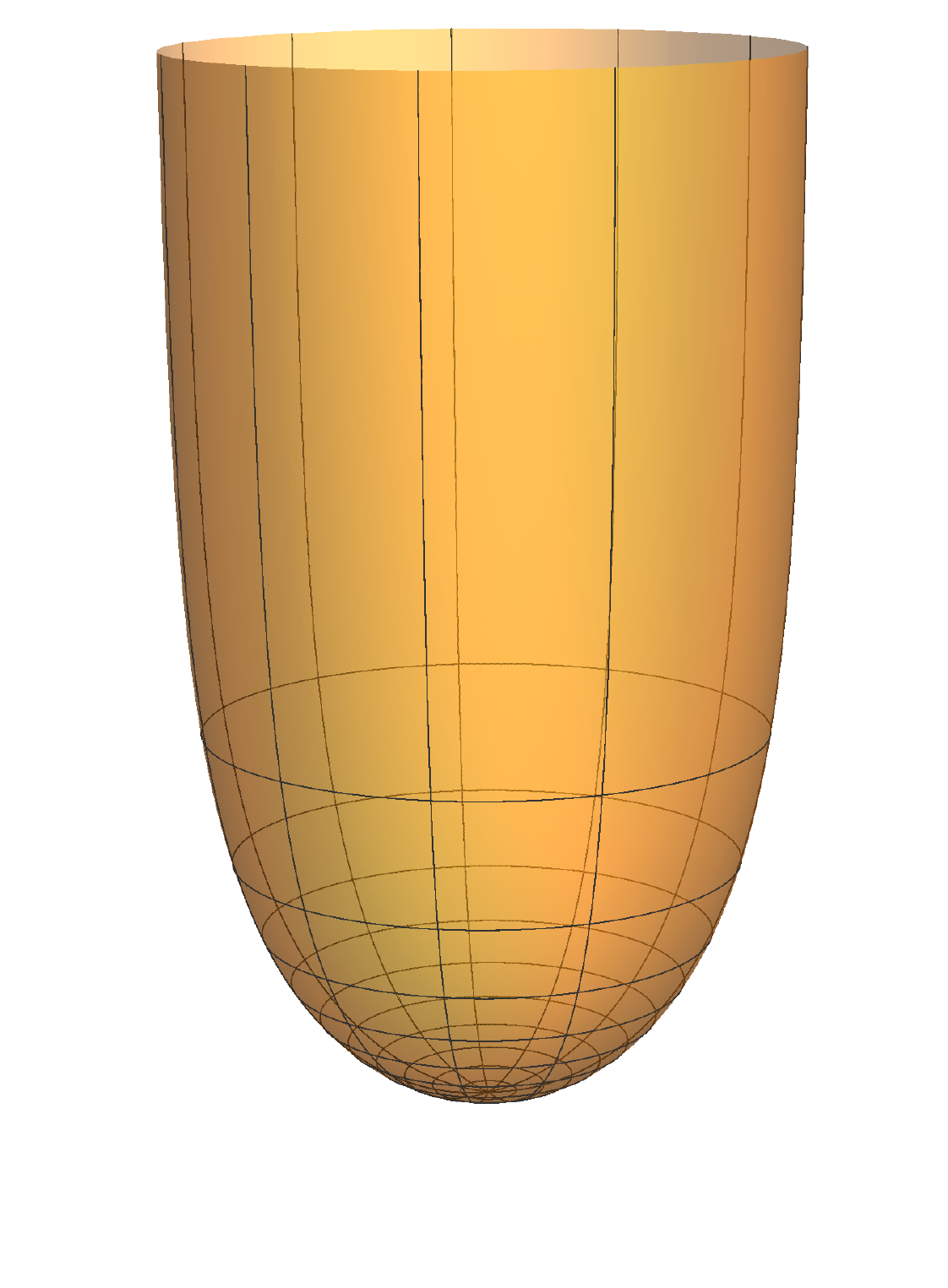}\quad 
\end{center}
\caption{A rotational, strictly convex, $\cH$-graph in $\R^3$ asymptotic to a cylinder, with mean curvature given by \eqref{hga}.}
\label{fig:grim}
\end{figure}

The curve $\gamma(s)$ is a strictly convex curve asymptotic to the vertical line $x=\pi/2$. A straightforward computation yields that $\Sigma$ has angle function given by $\nu(s)=\cos s$, and its mean curvature function can be written as a function of $\nu$, as follows: $H_{\Sigma}(s)=\ch_{\gamma}(\nu(s))$, where $\ch_{\gamma}\in C^1([0,1])$ is given by 
\begin{equation}\label{hga}
\ch_\gamma(y)=y+\frac{\sqrt{1-y^2}}{\arccos y}.
\end{equation}
Since $\ch_\gamma(y)>0$ for all $y\in [0,1]$, we can extend $\ch_\gamma$ to be a positive $C^1$ function on $[-1,1]$.
Thus, for the prescribed mean curvature function $\cH_{\gamma}>0$ in $\S^2$ associated to such $\ch_\gamma$ we obtain the existence of a strictly convex rotational $\cH_\gamma$-graph with the topology of a disk which is asymptotic to the right circular cylinder of radius $\pi/2$.

We consider next the case that the rotational surface $\Sigma$ does not touch its rotation axis. Assume that $\ch\in C^1([-1,1])$ satisfies $\ch(0)=0$, with $\ch(y)<0$ (resp. $\ch(y)>0$) if $y<0$ (resp. $y>0$). Then, by analyzing the resulting phase space as in previous results, we obtain the existence of \emph{wing-like $\ch$-catenoids}; see Figure \ref{fig:wing}. They have a similar shape to the well-known rotational wing-like translating solitons to the mean curvature flow (see \cite{CSS}), which are recovered for the specific choice $\ch(y)=y$. All of them are properly immersed annuli in $\R^3$, with their two ends going both either upwards or downwards. 

\begin{figure}[h]
\begin{center}
\includegraphics[width=.22\textwidth,valign=c]{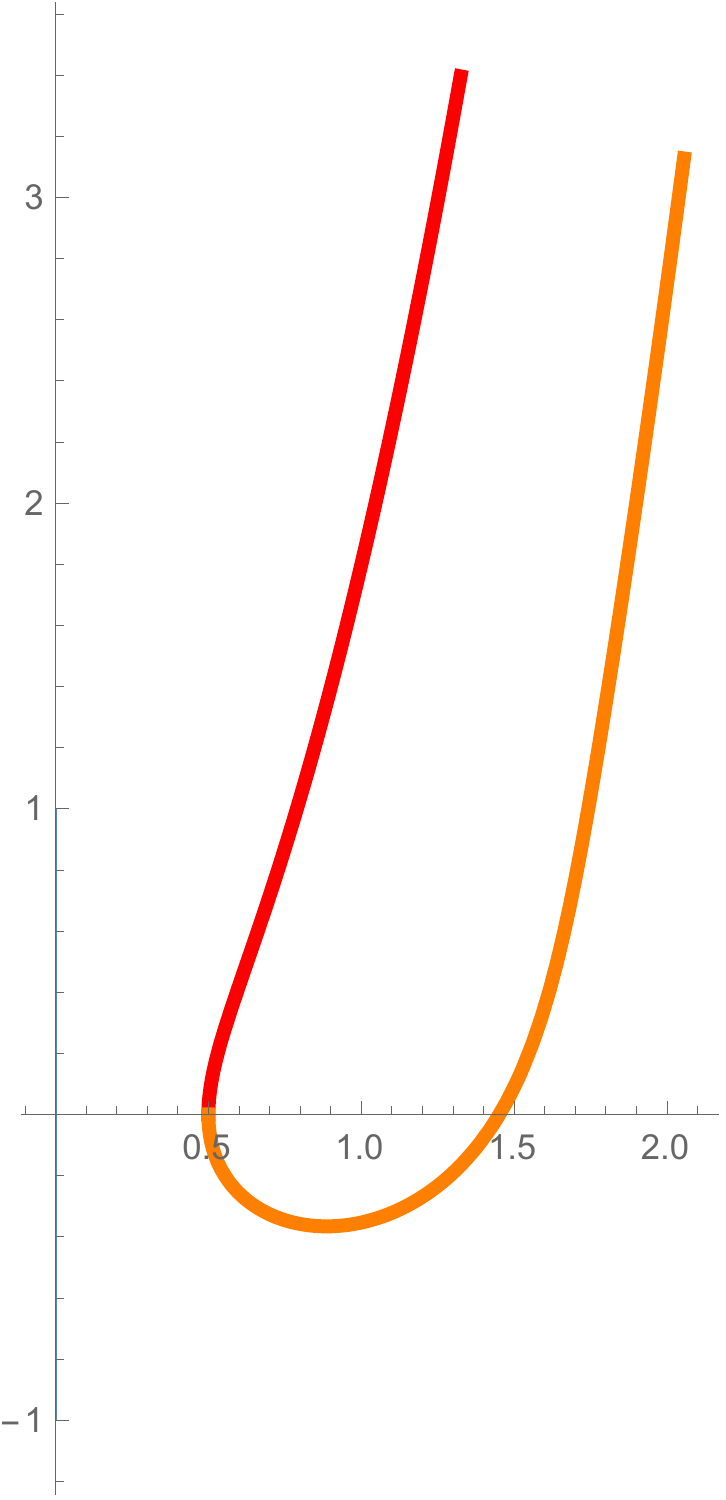}\quad 
\includegraphics[width=.43\textwidth,valign=c]{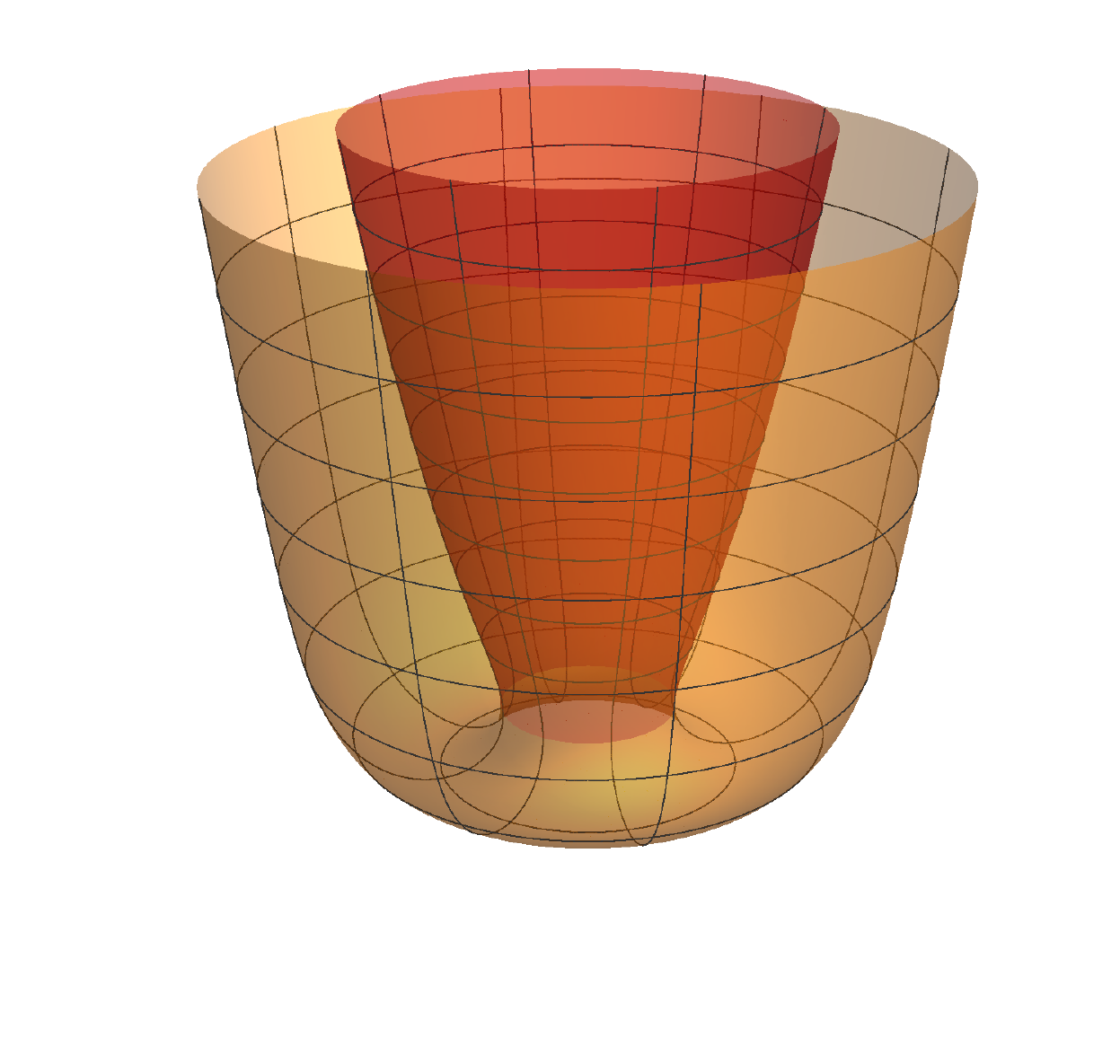}\quad 
\end{center}
\caption{A rotational wing-like $\cH$-surface in $\R^3$, for the choice $\ch(y)=y(y+2)$.}
\label{fig:wing}
\end{figure}

Consider, finally, the case that $\ch\in C^1([-1,1])$ satisfies $\ch(0)=0$, with $\ch(y)>0$ if $y\neq 0$. In that case there exist two types of properly immersed, rotational $\cH$-surfaces $\Sigma$ with the topology of an annulus, but this time with one end going upwards and the other going downwards. Each of such ends can be seen as a strictly convex graph over an exterior planar domain $\Omega=\R^2-D(0,r)$.

\begin{figure}[h]
\begin{center}
\includegraphics[width=.2\textwidth,valign=c]{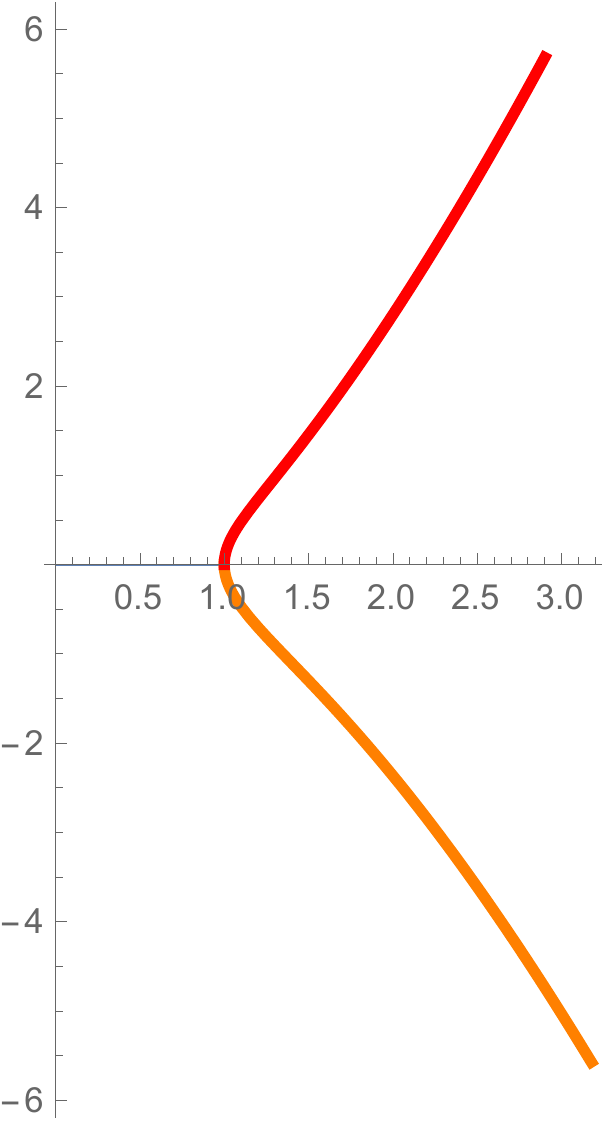}\quad
\hspace{2cm}
\includegraphics[width=.32\textwidth,valign=c]{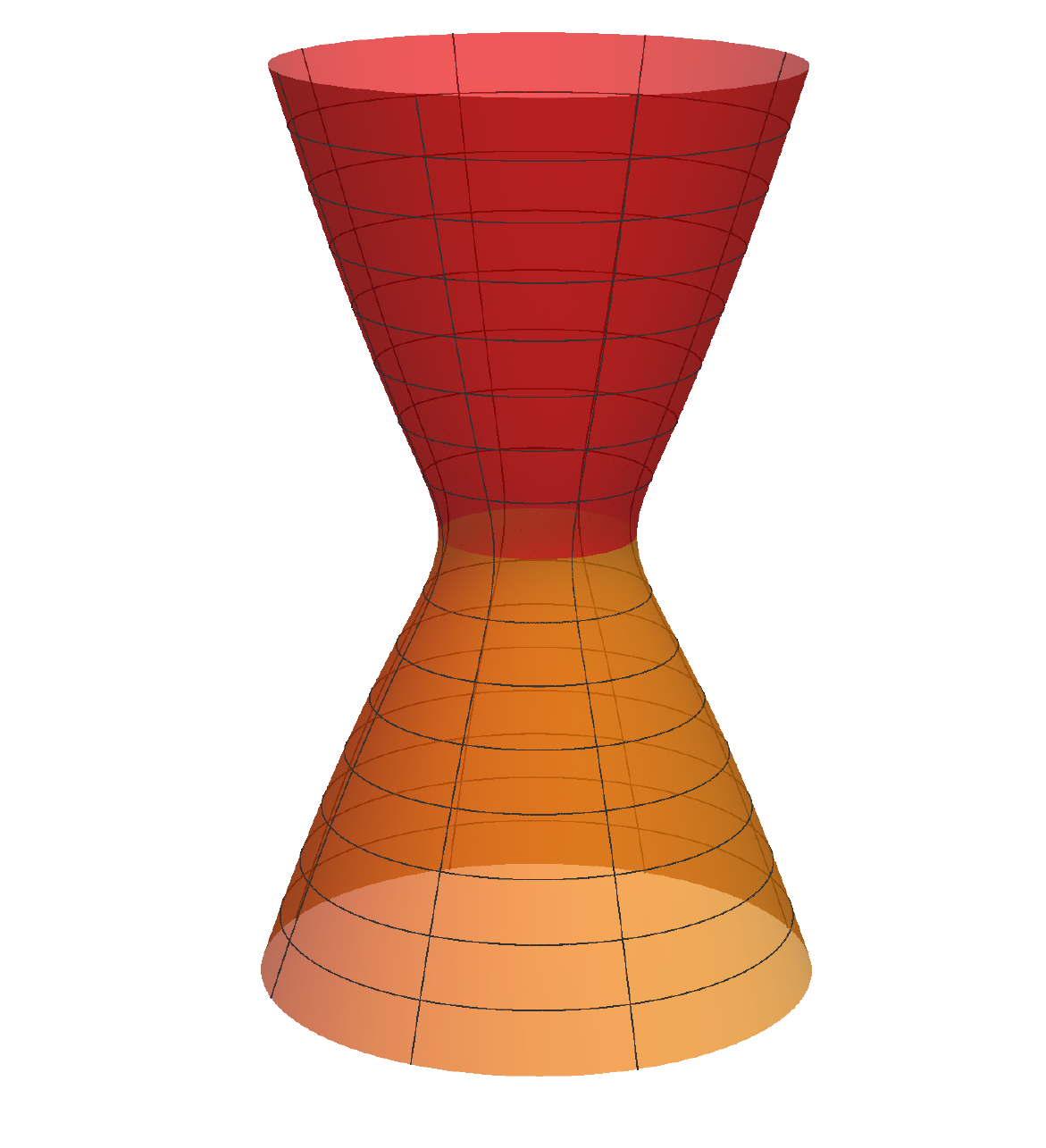}\quad 
\end{center}
\caption{A rotational embedded $\cH$-annulus in $\R^3$, for the choice $\ch(y)=y^2(y+2)$.}
\label{fig:hno1}
\end{figure}

In the first type of these examples, the coordinate $z(s)$ of the profile curve of $\Sigma$ is monotonous, and so $\Sigma$ is properly embedded; see Figure \ref{fig:hno1}. In the second type of examples, $z(s)$ is not monotonous, and the profile curve describes a \emph{nodoid-type} loop along which loses embeddedness. See Figure \ref{fig:hno2}

\begin{figure}[h]
\begin{center}
\includegraphics[width=.2\textwidth,valign=c]{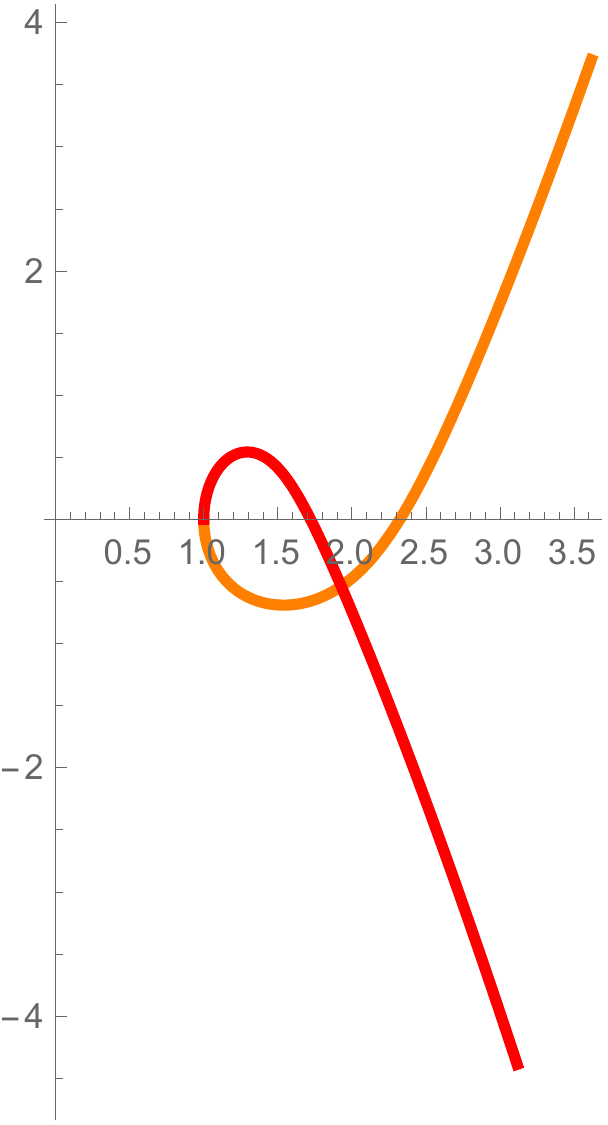}\quad
\hspace{2cm}
\includegraphics[width=.32\textwidth,valign=c]{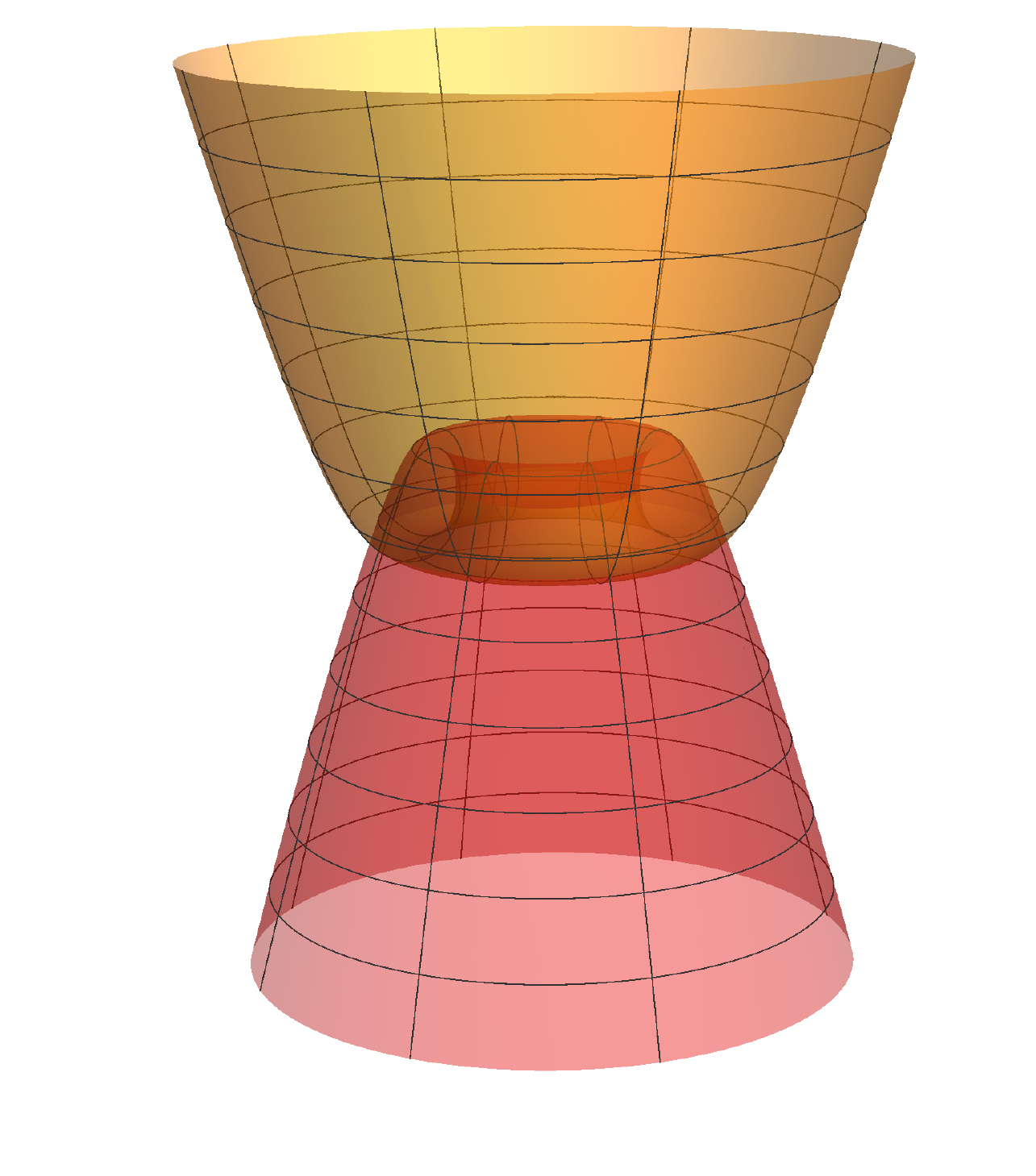}\quad 
\end{center}
\caption{A rotational non-embedded $\cH$-annulus in $\R^3$, for the choice $\ch(y)=y^2(y+2)$.}
\label{fig:hno2}
\end{figure}

\def\refname{References}

\hspace{0.4cm}

\noindent The authors were partially supported by
MICINN-FEDER, Grant No. MTM2016-80313-P, Junta de Andalucía Grant No.
FQM325, 
and Programa de Apoyo
a la Investigacion, Fundacion Seneca-Agencia de Ciencia y
Tecnologia Region de Murcia, reference 19461/PI/14.

\end{document}

*******

    Pacific J. Math.
    Volume 173, Number 1 (1996), 29-67.

Rotationally symmetric hypersurfaces with prescribed mean curvature.

Marie-Francoise Bidaut-Veron

(Soluciones radiales en R^n menos el origen, divergen en el origen)

*********

M. del Pino, I. Guerra, Ground states of a prescribed mean curvature equation, J. Differential Equations, 241
(2007), 112?129

********

	Alessio Pomponio. Oscillating solutions for prescribed mean curvature equations: euclidean and lorentz-minkowski cases. Discrete & Continuous Dynamical Systems - A, 2018, 38 (8) : 3899-3911. doi: 10.3934/dcds.201816

(Este paper tiene unas cuantas referencias básicas de la literatura)

*******

    J. Math. Soc. Japan
    Volume 67, Number 3 (2015), 1077-1108.

Global existence of generalized rotational hypersurfaces with prescribed mean curvature in Euclidean spaces, I

Katsuei KENMOTSU and Takeyuki NAGASAWA

********

Radial solutions of the Dirichlet problem for the prescribed mean curvature equation in a Robertson-Walker spacetime
D de la Fuente, A Romero, PJ Torres
Advanced Nonlinear Studies 15 (1), 171-181

********

López, R. (2016). Spacelike Graphs of Prescribed Mean Curvature in the Steady State Space. Advanced Nonlinear Studies, 16(4), pp. 807-819. Retrieved 20 Feb. 2019, from doi:10.1515/ans-2015-5056

*********

    Tohoku Math. J. (2)
    Volume 32, Number 1 (1980), 147-153.

Surfaces of revolution with prescribed mean curvature

Katsuei Kenmotsu

***********